\newtheorem{thm}{Theorem}
\newtheorem{conjecture}{Conjecture}
\newtheorem{lemma}[thm]{Lemma}
\newcommand{\R}{\mathbb{R}}
\newcommand{\Z}{\mathbb{Z}}
\newcommand{\C}{\mathbb{C}}
\newcommand{\ii}{\infty}
\newcommand\1{{\ensuremath {\mathds 1} }}
\renewcommand\phi{\varphi}
\newcommand{\wto}{\rightharpoonup}
\newcommand{\cV}{\mathcal{V}}
\newcommand{\cD}{\mathcal{D}}
\newcommand{\cW}{\mathcal{W}}
\newcommand\pscal[1]{{\ensuremath{\left\langle #1 \right\rangle}}}
\newcommand{\norm}[1]{ \left\| #1 \right\|}
\renewcommand{\geq}{\geqslant}
\renewcommand{\leq}{\leqslant}
\renewcommand{\tilde}{\widetilde}
\newcommand{\be}{\begin{equation}}
\newcommand{\ee}{\end{equation}}
\newcommand{\bq}{\begin{equation}}
\newcommand{\eq}{\end{equation}}
\newcommand{\un}{{1\!\!1}_4}
\newcommand{\Frac}{\displaystyle \frac}
\newcommand{\Sup}{\displaystyle \sup}
\newcommand{\eps}{\varepsilon}
\newcommand{\nn}{\nonumber}
\newcommand{\Spec}{{\rm Sp}}
\title[Dirac-Coulomb operators with general charge distribution I]{Dirac-Coulomb operators with general charge distribution\\[0.2cm] I. Distinguished extension and min-max formulas}
\author[M.J. Esteban]{Maria J. Esteban}
\address{CEREMADE, CNRS, Université Paris-Dauphine, PSL Research University, Place de Lattre de Tassigny, 75016 Paris, France} 
\email{esteban@ceremade.dauphine.fr}
\author[M. Lewin]{Mathieu Lewin}
\address{CEREMADE, CNRS, Université Paris-Dauphine, PSL Research University, Place de Lattre de Tassigny, 75016 Paris, France} 
\email{mathieu.lewin@math.cnrs.fr}
\author[\'E. Séré]{\'Eric Séré}
\address{CEREMADE, Université Paris-Dauphine, PSL Research University, CNRS, Place de Lattre de Tassigny, 75016 Paris, France} 
\email{sere@ceremade.dauphine.fr}
\date{\today. Final version to appear in \emph{Annales Henri Lebesgue}.}
\DeclareRobustCommand{\SkipTocEntry}[5]{}
\begin{document}
%%%%%%%%%%%%%%%%%%%%%%%%%%%%%%%%%%%%%%%%%%%%%%%%%%%%%%%%%%%%%%%%%%%%%%%%%%%%%%
%%%%%%%%%%%%%%%%%%%%%%%%%%%%%%%%%%%%%%%%%%%%%%%%%%%%%%%%%%%%%%%%%%%%%%%%%%%%%%

\begin{abstract}
This paper is the first of a series where we study the spectral properties of Dirac operators with the Coulomb potential generated by any finite signed charge distribution $\mu$. We show here that the operator has a unique distinguished self-adjoint extension under the sole condition that $\mu$ has no atom of weight larger than or equal to one. Then we discuss the case of a positive measure and characterize the domain using a quadratic form associated with the upper spinor, following earlier works~\cite{EstLos-07,EstLos-08} by Esteban and Loss. This allows us to provide min-max formulas for the eigenvalues in the gap. In the event that some eigenvalues have dived into the negative continuum, the min-max formulas remain valid for the remaining ones. At the end of the paper we also discuss the case of multi-center Dirac-Coulomb operators corresponding to $\mu$ being a finite sum of deltas. 
\end{abstract}

\maketitle

\tableofcontents

%%%%%%%%%%%%%%%%%%%%%%%%%%%%%%%%%%%%%%%%%%%%%%%%%%%%%%%%%%%%%%%%%%%%%%%%%%%%%%
%%%%%%%%%%%%%%%%%%%%%%%%%%%%%%%%%%%%%%%%%%%%%%%%%%%%%%%%%%%%%%%%%%%%%%%%%%%%%%
\section{Introduction}
%%%%%%%%%%%%%%%%%%%%%%%%%%%%%%%%%%%%%%%%%%%%%%%%%%%%%%%%%%%%%%%%%%%%%%%%%%%%%%
%%%%%%%%%%%%%%%%%%%%%%%%%%%%%%%%%%%%%%%%%%%%%%%%%%%%%%%%%%%%%%%%%%%%%%%%%%%%%%

Relativistic effects play an important role in the description of quantum electrons in molecules containing heavy nuclei, even for not so large values of the nuclear charge. Without relativity, gold would have the same color as silver~\cite{GlaAmb-10}, mercury would be solid at room temperature~\cite{CalPahWorSch-13} and batteries would not work~\cite{ZalPyy-11}. This is due to the very strong Coulomb forces experienced by the core electrons, which can then attain large velocities of the order of the speed of light. 

A proper description of such atoms and molecules is based on the Dirac operator~\cite{Thaller,EstLewSer-08}. This is an order-one differential operator which has several famous mathematical difficulties, all associated with important physical features. For instance the spectrum of the free Dirac operator is not semi-bounded which prevents from giving an unambiguous definition of a  ``ground state'' and turns out to be related to the existence of the positron~\cite{EstLewSer-08}. In addition, the Dirac operator has a critical behavior with respect to the Coulomb potential $1/|x|$ which gives a bound $Z\leq 137$ on the highest possible charge of atoms in the periodic table, for point nuclei.

This paper is the first in a series where we study the spectral properties of Dirac operators with the Coulomb potential generated by \emph{any} finite (signed) measure $\mu$ representing an external charge:
\begin{equation}
\boxed{D_0-\mu\ast\frac1{|x|}=-i\sum_{j=1}^3\alpha_j\partial_{x_j}+\beta-\mu\ast\frac1{|x|}.}
\label{eq:general_form}
\end{equation}
One typical example is when the measure $\mu$ describes the $M$ nuclei in a molecule and this corresponds to
$$\mu=\sum_{m=1}^M\alpha Z_m\;\delta_{R_m}$$
where $R_m\in\R^3$ and $Z_m\in(0,\ii)$ are, respectively, the positions and charges of the $M$ nuclei, and where $\alpha\simeq 1/137$ is the Sommerfeld fine structure constant. For instance for water (H$_2$O) we have $M=3$, $Z_1=Z_2=1$ and $Z_3=8$. 
In practice, one should also take into account the Coulomb repulsion between the electrons. In mean-field type models such as Dirac-Fock or Kohn-Sham~\cite{EstSer-99}, this is described by a nonlinear potential  which is often more regular than the nuclear attraction. This leads us to consider the following class of (signed) measures
$$\mu=\sum_{m=1}^M\alpha Z_m\;\delta_{R_m}+\widetilde\mu,$$
where the measure $\widetilde\mu$ is more regular (for instance absolutely continuous with respect to the Lebesgue measure). 

In this paper, we first quickly recall existing results and then prove the existence of a distinguished self-adjoint extension for operators of the form~\eqref{eq:general_form}, under the sole assumptions that 
\begin{equation}
 |\mu|(\R^3)<\ii \qquad \text{and}\qquad |\mu(\{R\})|<1\quad \text{for all $R\in\R^3$.}
 \label{eq:hyp_intro}
\end{equation}
In particular we allow an infinite number $M=+\ii$ of atoms but assume that the total nuclear charge is bounded. We follow well established methods for singular Dirac operators~\cite{Schmincke-72b,Wust-73,Wust-75,Wust-77,Nenciu-76,Nenciu-77,KlaWus-78,Klaus-80b,Kato-83} but face several difficulties due to the generality of our measure $\mu$. In a second step we consider the particular case of a positive measure (or more generally a measure so that the Coulomb potential $\mu\ast|x|^{-1}$ is bounded from below) and we characterize the domain using a method introduced in~\cite{EstLos-07, EstLos-08} and recently generalized in~\cite{SchSolTok-20}. This method allows us to provide min-max formulas  for the eigenvalues in the gap $(-1,1)$, following~\cite{GriSie-99, DolEstSer-00, DolEstSer-00a, DolEstSer-03, DolEstSer-06,MorMul-15,Muller-16,EstLewSer-19,SchSolTok-20}. In the event that some eigenvalues have dived into the negative continuum, we prove that the min-max formulas remain valid for the remaining ones.

In a second article~\cite{EstLewSer-21b} we consider the problem of minimizing the first eigenvalue with respect to the (non-negative) charge distribution $\mu$ at fixed maximal charge $\nu$:
$$\lambda_1(\nu):=\inf_{\substack{\mu\geq0\\ \mu(\R^3)\leq\nu}}\lambda_1\left(D_0-\mu\ast\frac1{|x|}\right),$$
that is, we ask what is the lowest possible eigenvalue of all possible charge distributions with $\mu(\R^3)\leq\nu$. This problem is indeed our main motivation for studying Dirac operators of the type~\eqref{eq:general_form} with general measures $\mu$. We prove in~\cite{EstLewSer-21b} that there exists a critical coupling constant\footnote{In~\cite{EstLewSer-21b} $\overline\nu_1$ is simply denoted $\nu_1$.} 
$$\frac{2}{\frac\pi2+\frac2\pi}<\overline{\nu}_1\leq1$$ 
such that $\lambda_1(\nu)>-1$ for all $0\leq \nu<\overline\nu_1$, that is, the first eigenvalue cannot attain the bottom of the spectral gap if $\overline\nu_1-\mu(\R^3)$ remains positive. In addition, for $0\leq\nu<\overline\nu_1$ we prove the existence of a minimizing measure for $\lambda_1(\nu)$ and show that it concentrates on a set of Lebesgue measure zero. That the optimal measure is necessarily singular is the main justification for considering general charge distributions. 

It is well known that the first eigenvalue of the non-relativistic Schrödinger operator is concave in $\mu$, which implies that 
$$\inf_{\substack{\mu\geq0\\ \mu(\R^3)\leq\nu}}\lambda_1\left(-\frac\Delta2-\mu\ast\frac1{|x|}\right)=\lambda_1\left(-\frac\Delta2-\frac\nu{|x|}\right)=-\frac{\nu^2}{2}.$$
We conjecture that a similar equality holds in the Dirac case, which would imply $\overline\nu_1=1$ and $\lambda_1(\nu)=\sqrt{1-\nu^2}$. We mention below some physical implications that the validity of this conjecture would have for the electronic contribution to the potential energy surface of diatomic systems and other molecules.

\smallskip

The paper is organized as follows. In the next section we show that the operator~\eqref{eq:general_form} has a unique distinguished self-adjoint extension under the assumption~\eqref{eq:hyp_intro}, whereas in Section~\ref{sec:min-max} we discuss the domain and min-max formulas for the eigenvalues under the additional condition that $\mu\geq0$. The rest of the paper is then devoted to the proofs of our main results.

\bigskip

\noindent{\textbf{Acknowledgement.}} This project has received funding from the European Research Council (ERC) under the European Union's Horizon 2020 research and innovation programme (grant agreement MDFT No 725528 of M.L.), and from the Agence Nationale de la Recherche (grant agreement molQED).

\section{Distinguished self-adjoint extension for a general charge}\label{sec:self-adjointness}

In this section we give a meaning to the operator $D_0-\mu\ast|x|^{-1}$ for the largest possible class of bounded measures $\mu$. But first we need to clarify some notation. 

\subsection{Notation}
We work in a system of units for which $m=c=\hbar=1$. The free Dirac operator $D_0$ is given by
\begin{equation}
D_0\ = -i\boldsymbol{\alpha}\cdot\boldsymbol{\nabla} + \beta = \ - i\sum^3_{k=1} \alpha_k \partial_{x_k} \ + \ {\bf \beta},
\label{eq:def_Dirac}
\end{equation}
where $\alpha_1$, $\alpha_2$, $\alpha_3$ and $\beta$ are Hermitian matrices which  satisfy the following anticommutation relations:
\begin{equation*} \label{CAR}
\left\lbrace
\begin{array}{rcl}
 {\alpha}_k
{\alpha}_\ell + {\alpha}_\ell
{\alpha}_k  & = &  2\,\delta_{k\ell}\,\un,\\
 {\alpha}_k {\beta} + {\beta} {\alpha}_k
& = & 0,\\
\beta^2 & = & \un.
\end{array} \right. 
\end{equation*}
The usual representation in $2\times 2$ blocks is given by 
$$ \beta=\left( \begin{matrix} I_2 & 0 \\ 0 & -I_2 \\ \end{matrix} \right),\quad \; \alpha_k=\left( \begin{matrix}
0 &\sigma_k \\ \sigma_k &0 \\ \end{matrix}\right)  \qquad (k=1, 2, 3)\,,
$$
where the Pauli matrices are defined as
$$\sigma _1=\left( \begin{matrix} 0 & 1
\\ 1 & 0 \\ \end{matrix} \right),\quad  \sigma_2=\left( \begin{matrix} 0 & -i \\
i & 0 \\  \end{matrix}\right),\quad  \sigma_3=\left( 
\begin{matrix} 1 & 0\\  0 &-1\\  \end{matrix}\right) \, .$$
The operator $D_0$ is self-adjoint with domain $H^1(\R^3,\C^4)$ and its spectrum is $\Spec(D_0)=(-\ii,-1]\cup[1,\ii)$~\cite{Thaller}. 

\subsection{Distinguished self-adjoint extension}
The study of self-adjoint extensions is a classical subject for Dirac-Coulomb operators. For instance, the one-center Coulomb operator
$$D_0-\frac{\nu}{|x|}$$
is known to be essentially self-adjoint on $C^\ii_c(\R^3\setminus\{0\},\C^4)$ for $0\leq \nu\leq\sqrt3/2$, with domain $H^1(\R^3,\C^4)$ when $0\leq\nu<\sqrt{3}/2$, whereas it has several possible self-adjoint extensions for $\nu>\sqrt3/2$. When $\sqrt3/2<\nu<1$, there is a unique extension which is distinguished by the property that its domain is included in the `energy space' $H^{1/2}(\R^3,\C^4)$. The domain of the extension is always larger than $H^1(\R^3,\C^4)$ when $\nu\in[\sqrt3/2,1)$, with a regularity at the origin which deteriorates when $\nu$ increases. When $\nu=1$ one can also define a distinguished self-adjoint extension (obtained for instance by taking the limit $\nu\to1^-$) but its domain is no longer included in $H^{1/2}(\R^3,\C^4)$. There is no physically relevant extension for $\nu>1$. This corresponds to the previously mentioned property that we should work under the constraint $\nu=\alpha Z\le1$ which implies the bound $Z\leq 137$ on the maximal possible (integer) point charge in the periodic table, within Dirac theory. 

These relatively simple ODE-type results for the one-center Dirac operator have been generalized in many directions. Investigating how robust the distinguished extension is with regard to perturbations has indeed been the object of many works. A survey of known results, mainly in the one-center case, may be found for instance in~\cite[Sec.~1.3]{EstLewSer-19}. A typical result is that Dirac operators in the form
\begin{equation}
 D_V=D_0+V(x)\qquad\text{with}\qquad |V(x)|\leq \frac{\nu}{|x|},\qquad \nu<1
 \label{eq:pointwise_one-center}
\end{equation}
also have a unique distinguished self-adjoint extension, characterized by the property that the domain is included in $H^{1/2}(\R^3,\C^4)$. Hence a pointwise bound on $V$ is sufficient, which is rather remarkable for operators which are not semi-bounded. This extension can also be obtained as a norm-resolvent limit by truncating the singularity of the potential $V$. Note that the critical case $\nu=1$ was handled in~\cite{EstLos-07} for $V\geq0$ but the domain is not necessarily included in the energy space $H^{1/2}(\R^3,\C^4)$. 

There are fewer results about the multi-center case, which is however as physically important as the atomic case. The intuitive picture is that self-adjointness is essentially a local problem at the singularities of the potentials, hence the results should be similar for multi-center Coulomb potentials. Indeed, Nenciu \cite{Nenciu-77} and Klaus \cite{Klaus-80b} have proved that there is a unique distinguished self-adjoint extension for $D_0+V$ under the pointwise assumption that 
\begin{equation}
|V(x)|\leq \sum_{m=1}^M\frac{\nu_m}{|x-R_m|}
\label{eq:ass_V_subcritical}
 \end{equation}
with $R_m\neq R_\ell$ for $m\neq \ell$ and $0\leq \nu_m<1$ for all $m=1,...,M$. 
Note that we can add to $V$ any bounded potential (or even a regular potential in the sense of~\cite{Nenciu-76,Nenciu-77}), without changing the domain of self-adjointness. For other results on multi-center Dirac operators, see~\cite{Kanarski-85,BriHog-03}.

One important tool in these works is the Birman-Schwinger-type formula of the resolvent~\cite{Nenciu-76,KlaWus-79,Klaus-80b}
\begin{equation}
(D_0+V-z)^{-1}=(D_0-z)^{-1}-(D_0-z)^{-1}\sqrt{|V|}(1+SK_z)^{-1}S\sqrt{|V|}(D_0-z)^{-1} 
 \label{eq:resolvent}
\end{equation}
where 
\begin{equation}
K_z=\sqrt{|V|}(D_0-z)^{-1}\sqrt{|V|},
\label{eq:def_K}
\end{equation}
and $S={\rm sgn}(V)$. This formula is valid as long as $1-SK_z$ is invertible with bounded inverse, and it can serve to define $D_V$ via its resolvent. In the one-center case $|V(x)|\leq \nu|x|^{-1}$ we have for $z=0$
$$\norm{K_0}\leq \nu$$
(this was conjectured in~\cite{Nenciu-76} and then, proved in~\cite{Wust-77,Kato-83,ArrDuoVeg-13}). This gives the distinguished self-adjoint extension for $0\leq\nu<1$. In the multi-center case one cannot always use $z=0$ since it can be an eigenvalue, when $\sum_{j=1}^M\nu_j$ is large and the nuclei are close to each other. But the set of problematic $z$'s is at most countable, hence the formula also allows one to define the distinguished self-adjoint extension~\cite{Nenciu-77,Klaus-80b}.

The above results do not cover the case where $V(x)=-\mu\ast|x|^{-1}$ for general measures $\mu$. Such potentials indeed diverge like $\mu(\{R\})|x-R|^{-1}$ at points $R\in\R^3$ where $\mu(\{R\})>0$, but they can diverge at many other points in space where $\mu$ is not necessarily a delta. In this paper, we prove the following result, which confirms the intuition that only deltas are problematic with regard to self-adjointness.

\begin{thm}[Distinguished self-adjoint extension]\label{thm:distinguished}
Let $\mu$ be any finite signed Borel measure on $\R^3$, such that 
$$|\mu(\{R\})|<1\qquad \text{for all $R\in\R^3$.}$$ 
Then the operator
$$D_0-\mu\ast\frac{1}{|x|},$$
defined first on $H^1(\R^3,\C^4)$ or on $C^\ii_c(\R^3,\C^4)$, has a unique self-adjoint extension whose domain is included in $H^{1/2}(\R^3,\C^4)$. The functions in the domain $\cD(D_0-\mu\ast|x|^{-1})$ of the extension have a square-integrable gradient in $\R^3\setminus\bigcup_{j=1}^KB_r(R_j)$ 
%  \label{eq:domain_in_H1_loc}
for all $r>0$, where $R_1,...,R_K\in\R^3$ are all the points such that $|\mu(\{R_j\})|\geq1/2$. 
The operator $D_0-\mu\ast|x|^{-1}$ is the norm-resolvent limit of $D_0-\mu\ast\frac{1}{|x|}\1(|\mu\ast\frac{1}{|x|}|\leq n)$ when $n\to\ii$. Its essential spectrum is
$$\Spec_{\rm ess}\left(D_0-\mu\ast|x|^{-1}\right)=(-\ii,-1]\cup[1,+\ii).$$
\end{thm}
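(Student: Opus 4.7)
My approach is the Birman--Schwinger strategy of Nenciu and Klaus~\cite{Nenciu-77,Klaus-80b}, adapted to the fact that $V:=-\mu\ast|x|^{-1}$ may be singular at uncountably many points. Set $S:={\rm sgn}(V)$ and $K_z:=\sqrt{|V|}(D_0-z)^{-1}\sqrt{|V|}$. The plan is to show that $K_z$ extends to a bounded operator on $L^2(\R^3,\C^4)$, that $1+SK_z$ is boundedly invertible for $z$ in a set rich enough to use analytic Fredholm theory, and that the resolvent formula~\eqref{eq:resolvent} then defines the resolvent of a unique self-adjoint operator $D_V$ whose domain lies in $H^{1/2}(\R^3,\C^4)$ (the inclusion coming from $(D_0-z)^{-1}\sqrt{|V|}:L^2\to H^{1/2}$, dual to $\sqrt{|V|}:H^{1/2}\to L^2$).

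The central step is a uniform Kato-type inequality: for any finite positive Borel measure $\sigma$ on $\R^3$ with $\sup_{R}\sigma(\{R\})<1$, the potential $\sigma\ast|x|^{-1}$ is form-bounded with respect to $|D_0|$ with a constant depending only on the atomic weights of $\sigma$, not on its continuous part. To prove it, I would split $\sigma=\sigma_\ast+\tilde\sigma$, where $\sigma_\ast=\sum_{j=1}^K c_j\delta_{R_j}$ collects the finitely many atoms of weight at least some threshold $\theta<1$. Each centre of $\sigma_\ast$ gives a subcritical one-centre Coulomb potential controlled by the classical Kato/Herbst inequality with constant strictly less than one. The remainder $\tilde\sigma\ast|x|^{-1}$ has local behaviour near any point dominated by $\theta/|x-R|$ plus a Riesz-type tail with bounded modulus, so the Nenciu--Klaus form bound applies locally. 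Combining the two pieces and applying the resulting inequality to $|\mu|$, I can make $\|SK_z\|<1$ for $z$ in a suitable half-plane $\{\Im z>M\}$; analytic Fredholm theory then extends the invertibility of $1+SK_z$ to all of $\C\setminus\Spec(D_0)$ outside a discrete set.

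With $(1+SK_z)^{-1}$ available, formula~\eqref{eq:resolvent} defines a family $R(z)$ satisfying the resolvent identity and $R(z)^*=R(\bar z)$, so it is the resolvent of a self-adjoint $D_V$. Uniqueness among $H^{1/2}$-valued extensions is immediate since $\sqrt{|V|}\psi\in L^2$ is well defined for $\psi\in H^{1/2}$, forcing any such extension to satisfy~\eqref{eq:resolvent}. For the approximation claim, take $V_n:=V\,{\mathbf 1}(|V|\leq n)$: the truncated operator $D_0+V_n$ is self-adjoint on $H^1$, $K_z^{(n)}\to K_z$ in norm by the form bound and dominated convergence, and hence $(D_0+V_n-z)^{-1}\to R(z)$ in norm. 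The extra local $H^1$ regularity away from the heavy atoms $\{R_1,\dots,R_K\}$ with $|\mu(\{R_j\})|\geq 1/2$ is obtained via a cutoff $\chi\in C_c^\infty(\R^3\setminus\{R_1,\dots,R_K\})$: near the support of $\chi$ the potential is a sum of subcritical Coulomb singularities with coupling $<1/2$ plus a bounded piece, so the Nenciu--Klaus result gives $\chi\psi\in H^1$ for every $\psi\in\cD(D_V)$. Finally, the essential spectrum equality follows from Weyl's theorem once $R(z)-(D_0-z)^{-1}$ is shown to be compact, which comes from compactness of $\sqrt{|V_n|}(D_0-z)^{-1}$ for each truncation together with the norm convergence.

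The main obstacle is the uniform Kato inequality for a general positive measure whose only constraint is $\sigma(\{R\})<1$ pointwise: previous multi-centre results~\cite{Nenciu-77,Klaus-80b} rest on a finite decomposition and the pointwise majoration~\eqref{eq:ass_V_subcritical}, both of which break down when $\mu$ carries infinitely many atoms or a singular continuous component whose convolution with $|x|^{-1}$ is unbounded on a large set. Producing a form bound whose constant is driven solely by the supremum of atomic weights---and hence insensitive to the rest of the mass distribution---is the technical heart of the theorem and is what makes the hypothesis~\eqref{eq:hyp_intro} sharp.
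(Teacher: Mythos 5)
Your overall scaffolding (Birman--Schwinger resolvent formula, analytic Fredholm continuation, norm-resolvent approximation by truncations, Weyl's theorem for the essential spectrum, cutoff argument for local $H^1$-regularity) is the same as the paper's and is sound as far as it goes. The problem lies in the ``central step'', which is exactly where the paper has to do genuinely new work.

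You assert a pointwise majoration of the non-atomic remainder: that $\tilde\sigma\ast|x|^{-1}$ is, near any point $R$, dominated by $\theta/|x-R|$ plus a bounded Riesz-type tail. This is false for general non-atomic measures. If $\tilde\sigma$ is, say, uniform surface measure on a sphere, or one-dimensional Hausdorff measure on a segment, or a singular continuous measure concentrated on a Cantor-type set, then $\tilde\sigma\ast|x|^{-1}$ can diverge on a whole lower-dimensional set and the rate of blow-up is not Coulombic -- there is no pointwise bound of the form $\theta/|x-R|+\text{bounded}$ at the singular points. Consequently your ``uniform Kato inequality'' with a constant driven only by the supremum of atomic weights is not justified by the argument you sketch, and in fact a \emph{uniform} (in $z$) operator bound $\|K_z\|<1$ is not what one should aim for: for large total mass the non-atomic part alone can push $\|K_0\|$ above $1$ even with zero atoms. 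What the proof actually needs, and what the paper proves, is the asymptotic bound
\begin{equation*}
\limsup_{|s|\to\infty}\norm{\sqrt{|V_\mu|}\,(D_0+is)^{-1}\sqrt{|V_\mu|}}\leq \max_{R\in\R^3}|\mu(\{R\})|<1 .
\end{equation*}
The mechanism that makes the non-atomic contribution disappear in this limit is \emph{compactness}, not a pointwise potential bound. The paper's Lemma~\ref{lem:compactness-no-atom} shows that $\1_{B_R}\sqrt{V_{\tilde\mu}}\,|p|^{-1/2}$ is compact whenever $\tilde\mu$ has no atoms (via a tiling argument and Lemma~\ref{lem:compacness-disjoint-supports} for disjoint supports); combined with the fact that $|p|(D_0+is)^{-1}\to 0$ strongly as $|s|\to\infty$ with uniformly bounded norm, all terms involving the non-atomic piece converge to zero in operator norm. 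This compactness lemma is the technical heart of the theorem and has no analogue in your proposal; without it the argument does not close. The other ingredient you should make explicit is the sharp identity $\|\,|x|^{-1/2}(\alpha\cdot p+\beta+is)^{-1}|x|^{-1/2}\|=1$ (Kato, W\"ust, Arrizabalaga--Duoandikoetxea--Vega), which is what makes the atomic contribution at each centre come out as exactly $\max_m|\nu_m|<1$; the classical Kato--Herbst inequality $|x|^{-1}\leq\frac{\pi}{2}|p|$ would give only $\frac{\pi}{2}\max_m|\nu_m|$ and would not suffice.
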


The proof of Theorem~\ref{thm:distinguished} is provided later in Section~\ref{sec:proof_distinguished}.

Note that in general $\cD(D_0-\mu\ast|x|^{-1})$ may differ from the domain of the operator $D_0-\sum_{j=1}^K\mu(\{R_j\})|x-R_j|^{-1}$, because the behavior of $\mu$ in the vicinity of the nuclei also plays a role. 

Although we think that there should be a similar result with a larger space than $H^{1/2}(\R^3,\C^4)$ under the weaker condition that $|\mu(\{R\})|\leq 1$, we have not investigated this question in the general setting. More about the critical case can be read in Section~\ref{sec:multicenter} where we investigate the particular case of a measure $\mu$ which is a pure sum of deltas, following~\cite{EstLewSer-19}.

One important argument of the proof is to show that the operator
$$\1_{B_R}\sqrt{\tilde\mu\ast\frac1{|x|}}\frac{1}{|p|^{\frac12}}$$
is compact, for every positive measure $\tilde\mu$ with no atom. Here we have used the notation $p=-i\nabla$ and $B_R$ for the ball of radius $R$ centered at the origin. Then, after separating the region about each nucleus from the rest of space, we show that 
$$\norm{\sqrt{|V_\mu|}\frac1{D_0+is}\sqrt{|V_\mu|}}<1$$
for $s$ large enough, where $V_\mu=\mu\ast|x|^{-1}$. This is what is needed to apply Nenciu's method~\cite[Cor.~2.1]{Nenciu-76}. 

%%%%%%%%%%%%%%%%%%%%%%%%%%%%%%%%%%%%%%%%%%%%%%%%%%%%%%%%%%%%%%
%%%%%%%%%%%%%%%%%%%%%%%%%%%%%%%%%%%%%%%%%%%%%%%%%%%%%%%%%%%%%%
\section{Domain and min-max formulas for positive measures}\label{sec:min-max}
%%%%%%%%%%%%%%%%%%%%%%%%%%%%%%%%%%%%%%%%%%%%%%%%%%%%%%%%%%%%%%
%%%%%%%%%%%%%%%%%%%%%%%%%%%%%%%%%%%%%%%%%%%%%%%%%%%%%%%%%%%%%%

By following a method introduced in~\cite{EstLos-07,EstLos-08} and further developed in~\cite{EstLewSer-19,SchSolTok-20}, one can describe the distinguished self-adjoint extension more precisely in the case of a positive measure: 
$$\mu\geq0.$$ 
What we really need in this section is that $V_\mu$ be bounded from below, but for simplicity we require the positivity of $\mu$ everywhere. We use the notation 
$$\boxed{V_\mu=\mu\ast\frac1{|x|}}$$
for the Coulomb potential induced by $\mu$. 

\subsection{Description of the domain}
Following~\cite{EstLewSer-19}, we introduce a new space for the upper component $\phi\in L^2(\R^3,\C^2)$ of a four-spinor. We define the following norm 
\begin{equation}
\norm{\phi}_{\cV_\mu}:= \left(\int_{\R^3}\frac{|\sigma\cdot \nabla\phi(x)|^2}{1+V_\mu(x)}\,dx+\int_{\R^3}|\phi(x)|^2\,dx\right)^{1/2}
\label{eq:norm_V}
\end{equation}
which is is well defined on $H^1(\R^3,\C^2)$ and controlled by the $H^1$--norm since $(1+V_\mu)^{-1}\leq1$. We can in fact replace $1+V_\mu$ by any $\lambda+V_\mu$ with $\lambda>0$ and we get an equivalent norm. Recall that $V_\mu\geq0$. 

Like in~\cite{EstLewSer-19} we need to know whether the completion $\cV_\mu$ of $H^1(\R^3,\C^2)$ for the norm in~\eqref{eq:norm_V} is the same as the largest space given by the conditions
$$\phi\in L^2(\R^3,\C^2),\qquad \frac{\sigma\cdot \nabla\phi}{(1+V_\mu)^{1/2}}\in L^2(\R^3,\C^2).$$
The following answers this question affirmatively.

\begin{thm}[The upper-spinor space $\cV_\mu$]\label{thm:def_cV_mu}
Let $\mu\geq0$ be any finite Borel measure on $\R^3$ so that 
$$\mu(\{R\})<1\qquad \text{for all $R\in\R^3$.}$$ 
We have 
\begin{equation}
\frac{\norm{\phi}^2_{H^{1/2}(\R^3,\C^2)}}{\max\big(2,16\,\mu(\R^3)\big)}\leq \norm{\phi}^2_{\cV_\mu}\leq \norm{\phi}^2_{H^{1}(\R^3,\C^2)}
\label{eq:in_H_1/2}
\end{equation}
for all $\phi\in H^1(\R^3,\C^2)$. 
The completion of $H^1(\R^3,\C^2)$ for the norm $\|\cdot\|_{\cV_\mu}$ is a subspace $\cV_\mu$ of $H^{1/2}(\R^3,\C^2)$ satisfying the continuous embeddings in~\eqref{eq:in_H_1/2}. It coincides with the completion of $C^\ii_c(\R^3,\C^2)$ for the same norm and is given by 
\begin{equation}
\cV_\mu=\left\{\phi\in L^2(\R^3,\C^2)\ :\ \exists g\in L^2(\R^3,\C^2),\quad \sigma\cdot \nabla\phi=(1+V_\mu)^{1/2}g\right\}
\label{eq:cV_mu_maximal}
\end{equation}
where $\sigma\cdot \nabla\phi$ is understood in the sense of distributions.
\end{thm}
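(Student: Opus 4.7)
The upper bound $\|\phi\|_{\cV_\mu}\leq\|\phi\|_{H^1}$ in \eqref{eq:in_H_1/2} is immediate from the Pauli identity $|\sigma\cdot\nabla\phi|^2=|\nabla\phi|^2$ together with $(1+V_\mu)^{-1}\leq 1$, which holds because $V_\mu\geq 0$. Combined with the density of $C^\infty_c(\R^3,\C^2)$ in $H^1(\R^3,\C^2)$, this also gives the equivalence of the two completions under $\|\cdot\|_{\cV_\mu}$; the continuous embedding $\cV_\mu\hookrightarrow H^{1/2}$ then follows from the reverse inequality by completeness.

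For the lower bound, which is the quantitative heart of the theorem, the plan is to combine Kato's inequality with operator monotonicity of the inverse. Integrating the sharp bound $|x-y|^{-1}\leq(\pi/2)|p|$ (valid uniformly in the shift $y$ as a quadratic form on $H^{1/2}$) against $d\mu(y)$ yields
\[
V_\mu\leq \tfrac{\pi}{2}\mu(\R^3)\,|p|
\]
as quadratic forms. By the L\"owner--Heinz inequality applied to the operator monotone function $t\mapsto t^{-1}$, this reverses to $(1+V_\mu)^{-1}\geq \bigl(1+\tfrac{\pi\mu(\R^3)}{2}|p|\bigr)^{-1}$. Testing the resulting form inequality against $(-i\sigma\cdot\nabla)\phi$, and using that the right-hand side is a Fourier multiplier commuting with $-i\sigma\cdot\nabla$, gives
\[
\int\frac{|\nabla\phi|^2}{1+V_\mu}\geq \left\langle\phi,\,\frac{|p|^2}{1+\tfrac{\pi\mu(\R^3)}{2}|p|}\phi\right\rangle.
\]
Adding $\|\phi\|^2$ on both sides and using a one-variable calculus comparison between $1+\tfrac{t^2}{1+at}$ and $1+t$ on $[0,\infty)$ (a short case check that yields a constant bounded by $\max(2,16\mu(\R^3))$ after substituting $a=\pi\mu(\R^3)/2$) completes the proof of the lower bound in \eqref{eq:in_H_1/2}.

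It then remains to identify the abstract completion $\cV_\mu$ with the ``maximal'' space $\cV_\mu^{\rm max}:=\{\phi\in L^2:\sigma\cdot\nabla\phi=(1+V_\mu)^{1/2}g,\ g\in L^2\}$ equipped with the natural Hilbert norm. The inclusion $\cV_\mu\subset\cV_\mu^{\rm max}$ is routine: for $\phi\in H^1$ take $g=(1+V_\mu)^{-1/2}\sigma\cdot\nabla\phi\in L^2$, and observe that multiplication by $(1+V_\mu)^{1/2}\in L^2_{\rm loc}$ preserves this distributional identity along Cauchy limits in $\cV_\mu$. The reverse inclusion is the main obstacle and requires showing that $H^1$ is dense in $\cV_\mu^{\rm max}$ for the $\cV_\mu$-norm. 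I would proceed in two steps: a smooth cutoff $\chi_R$ at infinity, whose commutator error $(\sigma\cdot\nabla\chi_R)\phi$ is controlled in $\cV_\mu$-norm by $\|\nabla\chi_R\|_\infty\|\phi\|_{L^2}=O(1/R)$, followed by mollification at scale $\eps$. The delicate step is then a Friedrichs-type commutator estimate $\|[\rho_\eps\ast,(1+V_\mu)^{-1/2}]h\|_{L^2}\to 0$ for $h=(1+V_\mu)^{1/2}g$, which relies on the local H\"older regularity of $(1+V_\mu)^{-1/2}$ guaranteed by the strict atomic bound $\mu(\{R\})<1$.
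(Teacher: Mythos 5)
The student's proposal handles the two halves of the theorem very differently, so I'll address them separately.

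\textbf{Lower bound in~\eqref{eq:in_H_1/2}.} This is a genuinely different, more direct argument than the paper's. The paper constructs the auxiliary Dirac operator $D_0-\frac{V_\mu}{8\mu(\R^3)}-|p|\frac{\beta+1}{4}$, verifies self-adjointness via Rellich--Kato and Hardy's inequality, and then converts the persistence of the spectral gap into the Hardy-type inequality via the continuation argument of Dolbeault--Esteban--S\'er\'e. The student instead integrates Kato's inequality $|x-y|^{-1}\leq\frac\pi2|p|$ against $d\mu(y)$ to get $1+V_\mu\leq 1+\frac{\pi\mu(\R^3)}2|p|$ as forms, applies operator antitonicity of $t\mapsto t^{-1}$ (valid because both operators are $\geq1$ and the form domain of $1+c|p|$ is contained in that of $1+V_\mu$), and tests the resulting operator inequality against $\sigma\cdot p\,\phi$, using that $(\sigma\cdot p)^2=|p|^2$ commutes with the Fourier multiplier $(1+c|p|)^{-1}$. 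The closing calculus estimate $1+\frac{t^2}{1+at}\geq\frac{1+t}{C}$ with $C\geq\max(2,1+a)$ and $a=\frac\pi2\mu(\R^3)$ indeed gives a constant dominated by $\max(2,16\mu(\R^3))$. This route is correct and more elementary; it sidesteps the auxiliary Dirac operator and the continuation lemma entirely, and in fact yields a slightly better constant. (Minor note: $|\sigma\cdot\nabla\phi|^2=|\nabla\phi|^2$ is not a pointwise identity; only the integrals agree, which is what the upper bound uses.)

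\textbf{Identification of $\cV_\mu$ with the maximal space~\eqref{eq:cV_mu_maximal}.} Here there is a genuine gap. The cutoff-at-infinity step is fine, but the proposed mollification via a Friedrichs-type commutator estimate $\|[\rho_\eps\ast,(1+V_\mu)^{-1/2}]h\|_{L^2}\to0$ is not justified. First, $h=(1+V_\mu)^{1/2}g$ need not be in $L^2$: near singularities of $V_\mu$ it is only $L^p_{\rm loc}$ for $p<3/2$, so the standard Friedrichs lemma does not apply. Second, and more fundamentally, the claim that $(1+V_\mu)^{-1/2}$ is locally H\"older as a consequence of $\mu(\{R\})<1$ is false. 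The atomic bound controls only point masses, not the rate of concentration of $\mu$; e.g.\ a measure with $\mu(B_r(R))\sim r^\beta$, $\beta<2$, produces $(1+V_\mu)^{-1/2}\sim|x-R|^{1-\beta/2}$ with H\"older exponent arbitrarily close to $0$, and a countable sum of shrinking deltas accumulating at a point makes $(1+V_\mu)^{-1/2}$ fail even to have a continuous representative. What is actually available (Lemma~\ref{lem:V_mu}) is $\nabla(1+V_\mu)^\alpha\in L^2$ for $\alpha<1/2$, which is far from Lipschitz/H\"older control. The paper avoids the commutator issue entirely: it first truncates $g$ on the set $\{V_\mu>\eps^{-1}\}$, then reconstructs the approximant $\phi_\eps$ through the explicit Riesz-kernel representation $\phi=i\frac1{4\pi}\big((1+V_\mu)^{1/2}g\big)\ast\sigma\cdot\frac{x}{|x|^3}$, and verifies membership in $H^1$ and convergence via Hardy--Littlewood--Sobolev. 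Without an analogue of that argument, the density of $H^1$ in the maximal space remains unproven in the student's approach.
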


The proof of Theorem~\ref{thm:def_cV_mu} is provided later in Section~\ref{sec:proof_cV_mu}. The first part of the theorem says that there is a Hardy-type inequality
\begin{equation}
\int_{\R^3}\frac{|\sigma\cdot \nabla\phi(x)|^2}{1+V_\mu(x)}\,dx+\int_{\R^3}|\phi(x)|^2\,dx\geq \frac{\norm{(-\Delta)^{\frac14}\phi}^2_{L^2(\R^3,\C^2)}}{\max\big(2,16\,\mu(\R^3)\big)},
\label{eq:Hardy-H-1/2}
\end{equation}
for $\phi\in H^1(\R^3,\C^2)$  and all positive measures $\mu$. The second part says that the space of functions $\phi\in L^2(\R^3,\C^2)$ such that $(1+V_\mu)^{-1/2}\sigma\cdot\nabla\phi\in L^2(\R^3,\C^2)$ (this being interpreted as in~\eqref{eq:cV_mu_maximal}), which could \emph{a priori} be larger than the completion $\cV_\mu$, is equal to $\cV_\mu$. This is an important property for what follows and it allows one to extend the inequality~\eqref{eq:Hardy-H-1/2} to all such functions~$\phi$. With regard to~\eqref{eq:cV_mu_maximal}, we remark that $V_\mu\in L^1_{\rm loc}(\R^3)$ for every Radon measure $\mu$, so that $(1+V_\mu)^{1/2}g\in L^1_{\rm loc}(\R^3,\C^2)$ is a distribution when $g\in L^2(\R^3,\C^2)$. Moreover, we prove later in Lemma~\ref{lem:V_mu} that $\nabla (1+V_\mu)^{-1/2}\in L^2(\R^3)$ for every $\mu$. This implies that $(1+V)^{-1/2}\sigma\cdot\nabla\phi$ makes sense as a distribution and, in~\eqref{eq:cV_mu_maximal}, it is then equivalent to requiring that this distribution belongs to $L^2(\R^3,\C^2)$. 

In the special case of 
$$\mu=\sum_{m=1}^M\nu_m\delta_{R_m}$$
with $0<\nu_m<1$, we have by~\cite{EstLewSer-19}
$$\cV_\mu=\left\{\phi \in L^2(\R^3,\C^2)\ :\ \int_{\R^3}\prod_{m=1}^M\frac{|x-R_m|}{1+|x-R_m|}|\sigma\cdot\nabla \phi(x)|^2\,dx<\ii\right\}.$$
In other words, the functions in $\cV_\mu$ must be in $H^1_{\rm loc}(\R^3\setminus\{R_1,...,R_M\},\C^2)$ and behave as stated close to the singularities. In general the space $\cV_\mu$ depends on the size and location of the singularities of the potential $V_\mu$, which are not necessarily produced by the atomic part of $\mu$. Recall that the potential $V_{\tilde\mu}$ of the non-atomic part $\tilde\mu$ of $\mu$ can still diverge in some places, namely at all the points $R\in\R^3$ so that 
$$\int_{\R^3}\frac{d\tilde\mu(x)}{|x-R|}=+\ii.$$ At each of these points, the norm is affected because $1/(1+V_\mu)$ tends to zero, allowing thereby $|\sigma\cdot\nabla\phi|^2$ to diverge a bit. 

We can now describe the domain of the distinguished self-adjoint extension using the space $\cV_\mu$.

\begin{thm}[Domain of the distinguished self-adjoint extension for $\mu\geq0$]\label{thm:self-adjoint}
Let $\mu\geq0$ be any finite Radon measure on $\R^3$ so that 
$$\mu(\{R\})<1\qquad \text{for all $R\in\R^3$.}$$ 
Then the domain of the self-adjoint extension from Theorem~\ref{thm:self-adjoint} is explicitly given  by
\begin{multline}
\cD(D_0-V_\mu)=\bigg\{\Psi=\begin{pmatrix}\phi\\\chi\end{pmatrix}\in L^2(\R^3,\C^4)\  :\\ \phi\in\cV_\mu,\quad  D_0\Psi-V_\mu\Psi\in L^2(\R^3,\C^4)\bigg\}
\label{eq:inclusion_domain_cV}
\end{multline}
where in the last condition $D_0\Psi$ and $V_\mu\Psi$ are understood in the sense of distributions. Moreover, this extension is the unique one included in $\cV_\mu\times L^2(\R^3,\C^2)$. We have 
$$\cD(D_0-V_\mu) \subset \cV_\mu\times \cV_\mu\subset H^{1/2}(\R^3,\C^4).$$

In addition, the Birman-Schwinger principle holds: $\lambda\in(-1,1)$ is an eigenvalue of $D_0-V_\mu$ if and only if $1$ is an eigenvalue of the bounded self-adjoint operator 
$$K_\lambda=\sqrt{V_\mu}\frac{1}{D_0-\lambda}\sqrt{V_\mu}.$$
\end{thm}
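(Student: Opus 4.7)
The plan is to construct a self-adjoint operator $\tilde D$ whose domain is exactly the right-hand side of~\eqref{eq:inclusion_domain_cV}, and then identify it with the distinguished extension of Theorem~\ref{thm:distinguished}. The engine of the construction is a quadratic form on the upper spinor. Fix $\lambda\in(-1,1)$ and set
$$q_\lambda(\phi):=\int_{\R^3}\frac{|\sigma\cdot\nabla\phi|^2}{1+\lambda+V_\mu}\,dx+\int_{\R^3}(1-\lambda-V_\mu)|\phi|^2\,dx,\qquad \phi\in\cV_\mu.$$
By Theorem~\ref{thm:def_cV_mu} we have $\cV_\mu\hookrightarrow H^{1/2}(\R^3,\C^2)$, and together with the Kato inequality $|x|^{-1}\leq (\pi/2)(-\Delta)^{1/2}$ this yields $\int V_\mu|\phi|^2 \leq (\pi/2)\mu(\R^3)\|\phi\|_{H^{1/2}}^2$. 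Hence the singular term is a relatively form-small perturbation of the $\cV_\mu$-norm. I would show that $q_\lambda$ is bounded below and closed on $\cV_\mu$ (for $\lambda$ sufficiently close to $-1$, then extended by a monotonicity/analyticity argument in $\lambda$), producing a bounded-below self-adjoint operator $P_\lambda$ on $L^2(\R^3,\C^2)$ acting formally as $P_\lambda\phi=\sigma\cdot\nabla\bigl[(1+\lambda+V_\mu)^{-1}\sigma\cdot\nabla\phi\bigr]+(1-\lambda-V_\mu)\phi$.

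Next I would use $P_\lambda$ to invert $\tilde D-\lambda$. For $\Psi=(\phi,\chi)$ in the candidate domain, writing out the Dirac equation $(D_0-V_\mu-\lambda)\Psi=F=(f_1,f_2)$ componentwise in distributions and formally eliminating $\chi$ gives
$$\chi=-\frac{1}{1+\lambda+V_\mu}\bigl(i\sigma\cdot\nabla\phi+f_2\bigr),\qquad P_\lambda\phi = f_1-i\sigma\cdot\nabla\!\left[\frac{f_2}{1+\lambda+V_\mu}\right].$$
For $\lambda$ below $\Spec(P_\lambda)$, the upper-spinor equation is solvable with $\phi\in\cV_\mu$, and defining $\chi$ by the displayed formula then produces a solution $\Psi$ with $\phi\in\cV_\mu$, $\chi\in L^2$ and $(D_0-V_\mu-\lambda)\Psi=F$. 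Symmetry of $\tilde D$ on $\cD(\tilde D)$ follows from integration by parts (justified by density of $C_c^\infty(\R^3,\C^4)$ and the distributional definition of $V_\mu\Psi$). Combined with the surjectivity of $\tilde D-\lambda$, this establishes self-adjointness of $\tilde D$.

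To locate $\cD(\tilde D)$ inside $\cV_\mu\times\cV_\mu$, note that for any $\Psi\in\cD(\tilde D)$ the formula for $\chi$ above and the bound $|\phi|\in L^2$, together with $\sigma\cdot\nabla\phi\in (1+V_\mu)^{1/2}L^2$, yield $\sigma\cdot\nabla\chi\in(1+V_\mu)^{1/2}L^2$ by direct computation of the distributional derivative and use of $\nabla(1+V_\mu)^{-1/2}\in L^2$ (Lemma~\ref{lem:V_mu}). The characterization~\eqref{eq:cV_mu_maximal} of $\cV_\mu$ then gives $\chi\in\cV_\mu$, hence $\cD(\tilde D)\subset\cV_\mu\times\cV_\mu\subset H^{1/2}(\R^3,\C^4)$. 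Since Theorem~\ref{thm:distinguished} provides a unique self-adjoint extension with domain in $H^{1/2}$, $\tilde D$ must coincide with the distinguished extension. Uniqueness of any self-adjoint extension whose domain lies in $\cV_\mu\times L^2$ is automatic: such an extension is symmetric and contained in $\tilde D$, hence equal to it by self-adjointness.

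Finally, for the Birman--Schwinger principle I would start from the identity $(D_0-V_n-\lambda)^{-1}=(D_0-\lambda)^{-1}+(D_0-\lambda)^{-1}\sqrt{V_n}(1-K_\lambda^{(n)})^{-1}\sqrt{V_n}(D_0-\lambda)^{-1}$ for the truncations $V_n:=V_\mu\mathbf{1}(V_\mu\leq n)$, and pass to the limit using the norm-resolvent convergence provided by Theorem~\ref{thm:distinguished} together with the norm convergence $K_\lambda^{(n)}\to K_\lambda$ (which in turn follows from an explicit Fourier-side bound on the kernel of $(D_0-\lambda)^{-1}$). The limiting identity shows that $\lambda\in(-1,1)$ is an eigenvalue of $D_0-V_\mu$ if and only if $1$ is an eigenvalue of $K_\lambda$. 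The main obstacle in this plan is the lower bound and closure of $q_\lambda$ on $\cV_\mu$: the singular term $-\int V_\mu|\phi|^2$ must be absorbed by the kinetic-type term $\int|\sigma\cdot\nabla\phi|^2/(1+\lambda+V_\mu)$, and both concentrate precisely where $V_\mu$ is large, so the Hardy-type inequality~\eqref{eq:Hardy-H-1/2} has to be used in a quantitative way, with special care at points where $\mu$ has an atomic component close to the critical threshold.
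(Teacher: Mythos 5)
The central gap in your plan is the lower bound and closability of $q_\lambda$ on $\cV_\mu$, and you have in fact identified it yourself at the end, which contradicts your earlier assertion that the Coulomb term is ``relatively form-small''. The chain of inequalities you propose, $\int V_\mu|\phi|^2 \leq \tfrac\pi2\mu(\R^3)\|\phi\|_{H^{1/2}}^2$ combined with the embedding $\|\phi\|_{H^{1/2}}^2\leq \max(2,16\mu(\R^3))\|\phi\|_{\cV_\mu}^2$ from Theorem~\ref{thm:def_cV_mu}, gives only
$$\int V_\mu|\phi|^2\leq \tfrac\pi2\mu(\R^3)\max\big(2,16\mu(\R^3)\big)\,\|\phi\|_{\cV_\mu}^2,$$
and the constant on the right is larger than $1$ whenever $\mu(\R^3)$ is not very small (certainly for $\mu(\R^3)\gtrsim 0.2$, which the theorem allows). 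Relative form-smallness means smallness with respect to the leading term up to an $L^2$ remainder, and no interpolation can produce this here because $V_\mu$ and the weight in the kinetic term are both critically singular at the same points; that is precisely why the problem is hard. This is not a side remark you can defer: without a genuine lower bound of the form $q_0(\phi)\geq\eps\|(-\Delta)^{1/4}\phi\|_{L^2}^2-C\|\phi\|_{L^2}^2$, you do not get the bounded-below closed form $P_\lambda$, the whole Schur-complement construction collapses, and the identification of the domain never gets started.

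The paper's proof closes exactly this gap, and does so in a way that is qualitatively different from what you propose. Its Step~1 proves the operator-norm estimate
$$\max\Spec\left(\sqrt{V_\mu}\,\frac1{D_0+\tfrac{C-\eps|p|}2(\beta+1)}\,\sqrt{V_\mu}\right)<1$$
for $C$ large and $\eps$ small, by relating the shifted resolvent $\big(D_0+\tfrac{C-\eps|p|}2(\beta+1)\big)^{-1}$ to an average of $(\alpha\cdot p+\beta\pm i\sqrt C)^{-1}$ via the algebraic identity of Lemma~\ref{lem:useful_resolvents}, and then invoking the large-$s$ estimate~\eqref{eq:to_be_estimated} already established in the proof of Theorem~\ref{thm:distinguished}. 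Step~2 converts this Birman--Schwinger-type bound into the quadratic form bound by truncating $V_\mu$ to bounded $W_n$, using the known equivalence between the non-existence of eigenvalues in $(-1,0)$ for $D_0-tW_n+\tfrac{C-\eps|p|}2(\beta+1)$ and the positivity of the associated Schur-complement form (the continuation argument from~\cite{DolEstSer-00}), and passing to the limit $n\to\ii$. This is the actual content of the theorem, and it is exactly the bridge between the Nenciu-type resolvent approach and the Esteban--Loss quadratic-form approach that the paper emphasizes as its main new observation.

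Two smaller issues. First, your solvability argument ``for $\lambda$ below $\Spec(P_\lambda)$'' conceals the fact that $P_\lambda$ depends on $\lambda$ nonlinearly; one must show that there exists a $\lambda$ with $\lambda<\inf\Spec(P_\lambda)$, which again requires the uniform lower bound of Step~2 and the monotone continuation argument — you cannot simply take $\lambda$ close to $-1$, because $q_\lambda$ degenerates there when $\mu$ is large. Second, the claim that symmetry of $\tilde D$ ``follows from integration by parts'' glosses over the need to justify such integration by parts for domain elements with the limited regularity permitted by $\cV_\mu$; the paper avoids this by invoking the abstract machinery of~\cite{SchSolTok-20} together with the essential self-adjointness of the multiplication operator $-1+V_\mu$. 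Your argument that $\cD(\tilde D)\subset\cV_\mu\times\cV_\mu$ using the characterization~\eqref{eq:cV_mu_maximal} and $\nabla(1+V_\mu)^{-1/2}\in L^2$ is in line with the paper's Step~3, and the identification with Theorem~\ref{thm:distinguished}'s extension via uniqueness in $H^{1/2}$ is correct; the Birman--Schwinger part via truncation and norm-resolvent convergence is also a reasonable substitute for the paper's citation of~\cite{Nenciu-76,Klaus-80b}. But as it stands, the proposal does not contain a proof of the central estimate it needs.
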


The condition $\mu\geq0$ is used to have $V_\mu\geq0$ which simplifies the definition of the space $\cV_\mu$. If $V_\mu$ is bounded-below, then the exact same result is valid with $1+V_\mu$ replaced by $C+V_\mu$ with a large enough constant $C$ everywhere. 

The proof of Theorem~\ref{thm:self-adjoint} is provided below in Section~\ref{sec:proofs_self_adjoint}.

\subsection{Min-max formulas for the eigenvalues}

Related to the above characterization of the domain are min-max formulas for eigenvalues~\cite{GriSie-99, DolEstSer-00, DolEstSer-00a, DolEstSer-03, DolEstSer-06,MorMul-15,Muller-16,EstLewSer-19,SchSolTok-20}. Our main result is the following

\begin{thm}[Min-max formulas]\label{thm:min-max}
Let $\mu\geq0$ be any finite non-trivial Radon measure on $\R^3$ so that 
$$\mu(\{R\})<1\qquad \text{for all $R\in\R^3$.}$$ 
Define the min-max values
\bq \lambda^{(k)} := \  \inf_{
 \scriptstyle W \ {\rm subspace \ of \ } F^+  \atop  \scriptstyle {\rm dim}
\ W =
k  } \  \Sup_{  \scriptstyle \Psi \in ( W \oplus F^- ) \setminus \{ 0 \} } \
\Frac{\pscal{\Psi, (D_0-V_\mu)\,\Psi}}{\|\Psi\|^2} \ ,
\qquad k \geq 1\,,
\label{eq:min-max} \eq
where $F$ is any chosen vector space satisfying 
$$\label{telesc}  C^\ii_c(\R^3,\C^4)\subseteq F \subseteq H^{1/2}(\R^3,\C^4)\,,$$  
and 
$$F^+:=\left\{\Psi=\begin{pmatrix}\phi\\0\\\end{pmatrix}\in F\right\},\qquad F^-:=\left\{\Psi=\begin{pmatrix}0\\\chi\\\end{pmatrix}\in F\right\}.$$
Then we have
\begin{itemize}
 \item[(i)] $\lambda^{(k)}$ is independent of the chosen space $F$;
 \item[(ii)] $\lambda^{(k)}\in[-1,1)$ for all $k$;
 \item[(iii)] it is a non-decreasing sequence converging to 1:
\begin{equation}
 \lim_{k\to\ii}\lambda^{(k)}=1.
 \label{eq:limit_lambda_k}
\end{equation} 
\end{itemize}
Let $k_0$ be the first integer so that 
$$\lambda^{(k_0)}>-1.$$
Then $(\lambda^{(k)})_{k\geq k_0}$ are all the eigenvalues of $D_0-V_\mu$ in non-decreasing order, repeated according to their multiplicity, which are larger than $-1$:
$$\Spec\left(D_0-\mu\ast\frac1{|x|}\right)\cap (-1,1)=\{\lambda^{(k_0)}, \lambda^{(k_0+1)},\cdots\}.$$
Finally, if 
\begin{equation}
 \mu(\R^3)\leq \frac{2}{\pi/2+2/\pi}\simeq 0.9,
 \label{eq:condition_Tix}
\end{equation}
then we have $\lambda^{(1)}\geq0$ and there is no eigenvalue in $(-1,0)$.
\end{thm}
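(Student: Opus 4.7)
The plan is to apply the abstract min-max principle of Dolbeault-Esteban-S\'er\'e, in the form developed in~\cite{EstLewSer-19,SchSolTok-20}, to the self-adjoint extension $H:=D_0-V_\mu$ constructed in Theorem~\ref{thm:self-adjoint}, using the orthogonal decomposition of $L^2(\R^3,\C^4)$ into upper/lower two-spinors. The first step is the explicit optimization over the lower component. For $\lambda>-1$ and $\Psi=\begin{pmatrix}\phi\\\chi\end{pmatrix}$ with $\phi\in\cV_\mu$ and $\chi\in L^2(\R^3,\C^2)$, a direct expansion gives
\begin{equation*}
\langle\Psi,(H-\lambda)\Psi\rangle=\int(1-\lambda-V_\mu)|\phi|^2-\int(1+\lambda+V_\mu)|\chi|^2+2\,\mathrm{Re}\,\langle-i\sigma\cdot\nabla\phi,\chi\rangle.
\end{equation*}
Since $1+\lambda+V_\mu>0$, the $\chi$-part is strictly concave; the supremum is attained at $\chi_\phi=-i(1+\lambda+V_\mu)^{-1}\sigma\cdot\nabla\phi$, which lies in $L^2$ by the definition of $\cV_\mu$ (Theorem~\ref{thm:def_cV_mu}), and equals $\cE_\lambda(\phi)-\lambda\|\phi\|^2$ with
\begin{equation*}
\cE_\lambda(\phi):=\int\frac{|\sigma\cdot\nabla\phi|^2}{1+\lambda+V_\mu}\,dx+\int(1-V_\mu)|\phi|^2\,dx.
\end{equation*}
The map $\lambda\mapsto\cE_\lambda(\phi)-\lambda\|\phi\|^2$ is continuous and strictly decreasing on $(-1,+\infty)$, so there is a unique root $\Lambda(\phi)\in[-1,+\infty)$ (set $\Lambda(\phi)=-1$ when the form is non-positive on all of $(-1,\infty)$), and
\begin{equation*}
\lambda^{(k)}=\inf_{\substack{W\subset F^+\\ \dim W=k}}\sup_{\phi\in W\setminus\{0\}}\Lambda(\phi).
\end{equation*}
The passage from the sup over $\chi\in F^-$ to the sup over $L^2_-$ is justified by the $L^2$-density of $F^-$ and the $L^2$-continuity of the $\chi$-quadratic form.

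From this reduced formula, item (i) is immediate: $\Lambda$ depends only on $\phi\in\cV_\mu$, and $C^\infty_c(\R^3,\C^2)$ is $\cV_\mu$-dense by Theorem~\ref{thm:def_cV_mu}, so the min-max over finite-dimensional subspaces of $F^+$ is unchanged for any intermediate $F$. For item (ii), the bound $\lambda^{(k)}\geq-1$ is built into the definition of $\Lambda$, while $\lambda^{(k)}<1$ follows from exhibiting, for each $k$, a finite-dimensional $W\subset C^\infty_c(\R^3,\C^2)$ on which $\sup_\phi\Lambda(\phi)<1$ (one takes $k$ linearly independent compactly supported functions and uses dominated convergence in $\cE_\lambda$ as $\lambda\uparrow 1$). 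Item (iii) follows from Theorem~\ref{thm:distinguished}: since $\Spec_{\mathrm{ess}}(H)=(-\infty,-1]\cup[1,+\infty)$, the only possible accumulation point of $(\lambda^{(k)})$ in $[-1,1]$ is $1$.

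For the identification of the $\lambda^{(k)}$, $k\geq k_0$, with the eigenvalues of $H$ in $(-1,1)$, I would invoke the abstract theorem of~\cite[Sec.~3]{EstLewSer-19} (or the variant in~\cite{SchSolTok-20}), whose hypotheses reduce here to: (a) a form-core property granted by Theorem~\ref{thm:def_cV_mu} and Theorem~\ref{thm:self-adjoint}; (b) the bound $\sup_{\chi\in F^-\setminus 0}\langle\chi,H\chi\rangle/\|\chi\|^2\leq-1$, trivially true since $\langle\chi,H\chi\rangle=-\|\chi\|^2-\int V_\mu|\chi|^2\leq-\|\chi\|^2$ as $V_\mu\geq 0$; and (c) the essential spectrum structure from Theorem~\ref{thm:distinguished}. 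The abstract theorem then delivers the full eigenvalue identification together with the ordering by multiplicity.

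The last assertion amounts to the Hardy-Dirac inequality
\begin{equation*}
\int V_\mu|\phi|^2\,dx\leq\int\frac{|\sigma\cdot\nabla\phi|^2}{1+V_\mu}\,dx+\int|\phi|^2\,dx,\qquad \phi\in\cV_\mu,
\end{equation*}
which is precisely $\cE_0(\phi)\geq0$, whence $\Lambda(\phi)\geq 0$ for every $\phi$ and so $\lambda^{(1)}\geq 0$. For a single delta $\mu=\nu\delta_R$ with $\nu\leq\nu_0:=2/(\pi/2+2/\pi)$ this is the classical Tix inequality. To extend it to an arbitrary $\mu\geq 0$ with $\mu(\R^3)\leq\nu_0$, I would write $\mu=\nu\,P$ with $P$ a probability measure and $\nu=\mu(\R^3)$, use the operator identity $H-\lambda=\int dP(R)\,(D_0-\nu/|x-R|-\lambda)$, and transfer the non-negativity directly at the level of the reduced functional $\Lambda$ via a convexity/approximation argument rather than pointwise (since Jensen in the denominator $1/(1+V_\mu)$ goes the wrong way). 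The main obstacle I anticipate is precisely this transfer of the sharp single-center Tix constant to an arbitrary finite positive measure of total mass $\nu_0$: a naive pointwise linearization of $V_\mu$ or of $1/(1+V_\mu)$ loses the Hardy-Dirac structure and yields a strictly worse constant, so one must argue at the operator or min-max level, leveraging Theorems~\ref{thm:def_cV_mu} and~\ref{thm:self-adjoint}.
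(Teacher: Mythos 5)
Your high-level framework — optimize over $\chi$, work with the reduced one-spinor functional, invoke the abstract min-max machinery of \cite{DolEstSer-00,EstLewSer-19,SchSolTok-20} — matches the paper's, but three of your steps have genuine gaps.

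First, and most substantially, your plan does not actually establish the coercivity $q_0(\phi)\geq \eps\||p|^{1/2}\phi\|^2_{L^2}-C\|\phi\|^2_{L^2}$, which is the ingredient that makes the abstract theorem applicable. Theorem~\ref{thm:def_cV_mu} only controls the \emph{kinetic} part of the form; $q_0$ contains the negative term $-\int V_\mu|\phi|^2$, and it is not at all automatic that this is dominated. The paper's Steps~1--2 are devoted to this: one proves
$\max\Spec\bigl(\sqrt{V_\mu}(D_0+\tfrac{C-\eps|p|}{2}(\beta+1))^{-1}\sqrt{V_\mu}\bigr)<1$
by a resolvent comparison lemma reducing to the estimate $\|\sqrt{V_\mu}(D_0\pm i\sqrt{C})^{-1}\sqrt{V_\mu}\|<1$ already obtained in the proof of Theorem~\ref{thm:distinguished}, and then deduces the lower bound on $q_0$ via the continuation argument of~\cite{DolEstSer-00}. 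This link between the operator estimate on $K_z$ and the quadratic form $q_\lambda$ is the core technical contribution; you cite it as ``granted by Theorem~\ref{thm:def_cV_mu} and Theorem~\ref{thm:self-adjoint}'' but those statements do not supply it.

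Second, your argument for $\lambda^{(k)}<1$ does not work as stated: taking $k$ fixed compactly supported $\phi$'s and letting $\lambda\uparrow 1$ by dominated convergence shows $\cE_\lambda(\phi)\to\cE_1(\phi)$, but there is no reason $\cE_1(\phi)<\|\phi\|^2$ for a generic $\phi$ (indeed it fails if $\phi$ is supported away from where $\mu$ lives). One must \emph{dilate}: taking $\phi_R(x)=R^{-3/2}\phi(x/R)$ with $R\to\infty$ makes the kinetic term $O(R^{-2})$ while the attractive potential term is $\asymp R^{-1}$, so $q_{1-\eps/R}(\phi_R)<0$ eventually. Relatedly, your argument that $\lambda^{(k)}\to1$ from the essential spectrum only applies once some $\lambda^{(k_0)}>-1$; you must also rule out $\lambda^{(k)}\equiv-1$ for all $k$, which the paper does by a separate contradiction argument combining IMS localization for the Pauli operator with a weak-convergence dichotomy.

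Third, for the Tix threshold you have correctly identified the obstruction but not the resolution. Working at the level of the reduced functional $\Lambda(\phi)$ or $\cE_0(\phi)$ is the wrong move, precisely because $\mu\mapsto(1+V_\mu)^{-1}$ is not affine and Jensen goes the wrong way, as you note. The fix is to go one level up to the Birman-Schwinger operator $K_0=\sqrt{V_\mu}D_0^{-1}\sqrt{V_\mu}$, sandwich it between $\pm\sqrt{V_\mu}P_0^\pm(1-\Delta)^{-1/2}\sqrt{V_\mu}$ using the free spectral projections, and observe that $\|\sqrt{V_\mu}P_0^+(1-\Delta)^{-1/2}\sqrt{V_\mu}\|=\|(1-\Delta)^{-1/4}P_0^+\,V_\mu\,(1-\Delta)^{-1/4}P_0^+\|$, a norm of an operator that \emph{is} linear in $\mu$; convexity of the norm plus translation invariance then reduce to the single-center Tix bound $\|(1-\Delta)^{-1/4}P_0^+|x|^{-1}(1-\Delta)^{-1/4}P_0^+\|=(\pi/2+2/\pi)/2$. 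This structural point — linearize in $\mu$ via the Birman-Schwinger/Brown-Ravenhall operator, not via the Schur-complement form — is the missing idea.
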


The min-max formula~\eqref{eq:min-max} for the eigenvalues of $D_0-V_\mu$ is based on a decomposition of the four-component wavefunction into its upper and lower spinors, 
$$\Psi=\begin{pmatrix}\phi\\ \chi\\\end{pmatrix}.$$
That one can obtain the eigenvalues by maximizing over $\chi$ and minimizing over $\phi$ was suggested first in the Physics literature by Talman~\cite{Talman-86} and Datta-Devaiah~\cite{DatDev-88}. The min-max is largely based on the fact that the energy $\pscal{\Psi,(D_0-V_\mu)\Psi}$ is concave in $\chi$ and more or less convex in $\phi$ (up to finitely many directions corresponding to the indices $k<k_0$). This approach is reminiscent of the Schur complement formula, which is an important ingredient in the proof. Indeed, writing the eigenvalue equation in terms of $\phi$ and $\chi$, solving the one for $\chi$ and inserting it in the equation of $\phi$, one formally finds that 
\begin{equation}
\left(-\sigma\cdot \nabla\frac{1}{1+\lambda+V_\mu}\sigma\cdot \nabla+1-\lambda-V_\mu\right)\phi=0.
\label{eq:nonlinear_eigenvalue_equation}
\end{equation}
The formal operator on the left is associated with the quadratic form
\begin{equation}
 q_\lambda(\phi):=\int_{\R^3}\frac{|\sigma\cdot\nabla\phi|^2}{1+\lambda+V_\mu}\,dx+\int_{\R^3}(1-\lambda-V_\mu)|\phi|^2
 \label{eq:def_q_lambda}
\end{equation}
and most of the work is to show that it is bounded from below, and equivalent to the $\cV_\mu$--norm squared, up to addition of a constant. This allows one to give a meaning to the operator in~\eqref{eq:nonlinear_eigenvalue_equation} by means of the Riesz-Friedrichs method, hence to transform the (strongly indefinite) Dirac eigenvalue problem into an elliptic eigenvalue problem, nonlinear in the parameter $\lambda$. In the proof of Theorems~\ref{thm:self-adjoint} and~\ref{thm:min-max} we explain how to relate any information on the operator $K_z$ in~\eqref{eq:def_K} to that on the quadratic form $q_\lambda$ and we then directly apply~\cite{DolEstSer-00,EstLewSer-19,SchSolTok-20}. Note finally that condition~\eqref{eq:condition_Tix} is directly related to an inequality due to Tix~\cite{Tix-98} used in the proof.

\subsection{Application to (critical and sub-critical) multi-center potentials}\label{sec:multicenter}

Let us now discuss the special case of a positive measure made of a finite sum of deltas,
$$\mu=\sum_{m=1}^M\nu_m\delta_{R_m},\qquad 0<\nu_m\leq1,$$
which describes the nuclear density of a molecule. We always assume that the $R_m$ are all distinct from each other.

When $\bar\nu:=\max\nu_m<1$, Theorem~\ref{thm:distinguished} provides the self-adjointness of the corresponding multi-center Dirac-Coulomb operator. This was proved before in~\cite{Nenciu-77,Klaus-80b}. Theorem~\ref{thm:self-adjoint} gives the domain in terms of the space $\cV_\mu$ which, as we have already mentioned, is equal to
$$\cV_\mu=\left\{\phi \in L^2(\R^3,\C^2)\ :\ \int_{\R^3}\prod_{m=1}^M\frac{|x-R_m|}{1+|x-R_m|}|\sigma\cdot\nabla \phi(x)|^2\,dx<\ii\right\}.$$
This is proved by localizing about each nucleus. One can also give a simple explicit lower bound on the quadratic form $q_\lambda$ in terms of the $\nu_m$ and of the $R_m$, which remains valid in the critical case $\max\nu_m=1$.

\begin{lemma}[Lower bound on $q_\lambda$ for multi-center]\label{lem:estim_q_lambda}
Let $\mu=\sum_{m=1}^M\nu_m\delta_{R_m}$ with $0\leq \nu_m\leq 1$ and where the $R_m$ are all distinct. Then we have 
\begin{multline*}
q_\lambda(\phi)\geq (1-\bar\nu^2)\int_{\R^3}\frac{|\sigma\cdot \nabla \phi|^2}{2+V_\mu}\,dx\\
-\left(2\lambda+\frac{2(M-1)\bar\nu}{d}+\frac{\kappa}{d^2(1+\lambda)}\right)\int_{\R^3}|\phi|^2\,dx. 
\end{multline*}
for every $\phi\in H^1(\R^3,\C^2)$, where $\bar\nu=\max(\nu_m)$, $d=\max_{k\neq\ell}|R_k-R_\ell|$ and $\kappa>0$ is a universal constant. 
\end{lemma}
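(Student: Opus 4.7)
The plan is to localize the quadratic form near each nucleus via a smooth partition of unity, reduce the estimate to the one-center Esteban--Loss Hardy-type inequality at each singularity, and carefully collect the cross-terms and localization errors.

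I would begin by choosing a smooth partition $\{\eta_m\}_{m=0}^M$ with $\sum_{m=0}^M\eta_m^2\equiv 1$, where $\eta_m$ for $m\geq 1$ is supported in a ball around $R_m$ of radius of order $d$ with pairwise disjoint supports, $\eta_0$ is supported uniformly away from every nucleus, and $\bigl\|\sum_m|\nabla\eta_m|^2\bigr\|_{L^\infty}\leq \kappa/d^2$. The Dirac IMS identity
\[
|\sigma\cdot\nabla\phi|^2=\sum_m|\sigma\cdot\nabla(\eta_m\phi)|^2-\sum_m|\nabla\eta_m|^2|\phi|^2,
\]
which follows from $\sigma\cdot\nabla(\eta_m\phi)=\eta_m\sigma\cdot\nabla\phi+(\sigma\cdot\nabla\eta_m)\phi$ together with the identity $\sum_m\eta_m\nabla\eta_m\equiv 0$, combined with $|\phi|^2=\sum_m|\eta_m\phi|^2$, decomposes the form as
\[
q_\lambda(\phi)=\sum_{m=0}^M q_\lambda(\eta_m\phi)-\int_{\R^3}\frac{\sum_m|\nabla\eta_m|^2}{1+\lambda+V_\mu}|\phi|^2\,dx,
\]
and the remainder is bounded below by $-\frac{\kappa}{d^2(1+\lambda)}\int|\phi|^2$ using $V_\mu\geq 0$, producing the last error term in the claim.

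Next, on each localized piece I would use the pointwise identity
\[
\frac{1}{1+\lambda+V_\mu}-\frac{1-\bar\nu^2}{2+V_\mu}=\frac{\bar\nu^2}{2+V_\mu}+\frac{1-\lambda}{(1+\lambda+V_\mu)(2+V_\mu)}\geq 0 \qquad (\lambda\leq 1)
\]
to extract the target $(1-\bar\nu^2)\int|\sigma\cdot\nabla(\eta_m\phi)|^2/(2+V_\mu)$ contribution, reducing the problem to a localized Hardy-type inequality. For $m\geq 1$, on $\mathrm{supp}(\eta_m)$ I write $V_\mu=\nu_m/|x-R_m|+W_m$ where $W_m=\sum_{k\neq m}\nu_k/|x-R_k|$ is uniformly bounded by roughly $2(M-1)\bar\nu/d$ by construction, and apply the one-center Esteban--Loss inequality
\[
\int_{\R^3}\frac{|\sigma\cdot\nabla\psi|^2}{a+\nu/|x-R|}\,dx+a\int|\psi|^2\,dx\geq\int\frac{\nu|\psi|^2}{|x-R|}\,dx \qquad (0\leq\nu\leq 1,\ a>0),
\]
which follows by scaling from the case $a=1$, i.e.\ from the positivity of the one-center $q_0^{(\nu)}$ established in~\cite{EstLos-07}. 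Taking $\psi=\eta_m\phi$, $\nu=\nu_m$, $R=R_m$, with $a$ tuned so that $a+\nu_m/|x-R_m|$ dominates $1+\lambda+V_\mu$ on $\mathrm{supp}(\eta_m)$, the absorption of $\int W_m|\eta_m\phi|^2$ produces the $2(M-1)\bar\nu/d$ cross-term contribution. On $\mathrm{supp}(\eta_0)$, $V_\mu$ is uniformly bounded by $O(M\bar\nu/d)$ and the corresponding local bound is elementary; summation over $m$ then closes the argument.

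The main technical obstacle is obtaining the coefficient $\bar\nu^2$ (rather than simply $1$) in front of the kinetic term on the right-hand side of the localized Hardy estimate. This forces one to exploit quantitatively the subcritical character of each one-center piece when $\bar\nu<1$, essentially by applying the Esteban--Loss inequality with charge $\bar\nu$ together with the bound $\nu_m\leq\bar\nu$, and by tracking the precise prefactor through the absorption step. In the critical case $\bar\nu=1$ the $(1-\bar\nu^2)$ factor vanishes and the statement reduces to the semi-boundedness bound $q_\lambda(\phi)\geq -\mathrm{err}\int|\phi|^2$, which follows directly from the critical Esteban--Loss inequality together with the multi-center error control from the partition.
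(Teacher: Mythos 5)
Your proposal follows essentially the same route as the paper: an IMS partition of unity near each nucleus, followed by the one-center Hardy--Esteban--Loss inequality on each localized piece to cancel the singular Coulomb part, with the bounded contribution of the other nuclei absorbed into the $a$-parameter and the localization error controlled by $\kappa/d^2(1+\lambda)$. One small precision: the scaled one-center inequality that is actually needed carries the factor $\nu^2$ on the kinetic term, $\nu^2\int\frac{|\sigma\cdot\nabla\psi|^2}{a+\nu/|x|}\,dx\geq\int\bigl(\frac{\nu}{|x|}-a\bigr)|\psi|^2\,dx$, and it is this form (not the weaker one you wrote without the prefactor) that matches the $\bar\nu^2$ weight left over after your pointwise splitting of $\frac{1}{1+\lambda+V_\mu}$ --- a point you correctly identify in your final obstacle paragraph, and which is exactly how the paper obtains the $(1-\bar\nu^2)$ coefficient.
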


By arguing as in~\cite{EstLos-07,EstLos-08,EstLewSer-19}, this allows us to find a self-adjoint extension distinguished from the property that its domain satisfies 
$$\cD(D_0-V_\mu)\subset \cW_\mu\times L^2(\R^3,\C^2)$$
where $\cW_\mu$ is the space obtained after closing the quadratic form $q_\lambda$. This space is larger than $\cV_\mu$ when $\max(\nu_m)=1$. A simple localization as in the proof of Lemma~\ref{lem:estim_q_lambda} allows to apply the results of~\cite{EstLewSer-19} and deduce that 
\begin{equation}
\cW_\mu=\left\{\phi \in L^2(\R^3,\C^2)\ :\ \int_{\R^3}\frac{\Big|\sigma\cdot\nabla \prod_{m=1}^M|x-R_m| \phi(x)\Big|^2}{\prod_{m=1}^M|x-R_m|\big(1+|x-R_m|\big)}\,dx<\ii\right\}.
\end{equation}
Arguing like in~\cite{EstLewSer-19}, one can prove that the distinguished self-adjoint extension is the norm-resolvent limit of the operators obtained after truncating the potential or after decreasing the critical nuclear charges. One can indeed treat any potential $V$ so that 
$$0\leq V\leq V_\mu$$
but then the space $\cW_\mu$ has to be modified accordingly. The arguments are exactly the same as in~\cite{EstLewSer-19}.

% \begin{remark}
% Since a finite positive measure $\mu$ always has finitely many atoms of mass $1$, it is also possible to define a distinguished self-adjoint extension of $D_0-V_\mu$ for any such measure $\mu$ so that $\max_{R\in\R^3}\mu(\{R\})\leq1$ but we do not write the details here. \marginpar{\color{red} Y penser}
% \end{remark}

In chemistry one is interested in the \emph{potential energy surface} which, by definition, is the graph of the first eigenvalue of the multi-center Dirac-Coulomb operator, seen as a function of the locations of the nuclei at fixed $\nu_m$ and including the nuclear repulsion:
$$(R_1,...,R_M)\mapsto \lambda_1\left(D_0-\sum_{m=1}^M\frac{\nu_m}{|x-R_m|}\right)+\sum_{1\leq m<\ell\leq M}\frac{\nu_m\nu_\ell}{|R_m-R_\ell|}.$$
The following is an extension of similar results proved before for $M=2$ in~\cite{Klaus-80b,HarKla-83,BriHog-03}.

\begin{thm}[Molecular case]\label{Thm-continuity}
Assume that $R_1, \dots, R_M$ are $M$ distinct points in $\R^3$, and that $\mu=\sum_{m=1}^M\nu_m\delta_{R_m}$ with $0< \nu_m< 1$.
Let 
$$\lambda_1\left(D_0-\sum_{m=1}^M\frac{\nu_m}{|x-R_m|}\right)$$
be the first min-max level as in~\eqref{eq:min-max} . 
Then, 

\noindent $(i)$ the map $(R_1,...,R_M)\mapsto \lambda_1\left(D_0-\sum_{m=1}^M\frac{\nu_m}{|x-R_m|}\right)$ is a continuous function on the open set $\Omega$ defined as
\begin{equation*}
\Omega=\bigg\{(R_1,...,R_M)\in(\R^3)^M\ :\ \lambda_1\left(D_0-\sum_{m=1}^M\frac{\nu_m}{|x-R_m|}\right)>-1\bigg\}.
\end{equation*}

\medskip

\noindent $(ii)$ Moreover,
\begin{equation}
\lim_{\min_{k\neq \ell}|R_k-R_\ell|\to\ii}\lambda_1\left(D_0-\sum_{m=1}^M\frac{\nu_m}{|x-R_m|}\right)=\sqrt{1-\max\nu_m^2}.
\end{equation} 

\medskip

\noindent $(iii)$ If in addition $\sum_{m=1}^M\nu_m< 2(\pi/2+2/\pi)^{-1}$ then 
\begin{equation}
\lim_{\max_{k\neq \ell}|R_k-R_\ell|\to0}\lambda_1\left(D_0-\sum_{m=1}^M\frac{\nu_m}{|x-R_m|}\right)=\sqrt{1-\left(\sum_{m=1}^M\nu_m\right)^2}.
\end{equation}
\end{thm}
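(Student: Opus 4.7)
\emph{Part~$(i)$.} The plan is to apply the Birman-Schwinger principle of Theorem~\ref{thm:self-adjoint}: $\lambda\in(-1,1)$ is an eigenvalue of $D_0-V_\mu$ if and only if $1$ is an eigenvalue of the bounded self-adjoint operator $K_\lambda=\sqrt{V_\mu}(D_0-\lambda)^{-1}\sqrt{V_\mu}$. For $(R_1,\dots,R_M)\in\Omega$ and $\lambda$ in a compact subset of $(-1,1)$, the one-center estimates used in the proof of Theorem~\ref{thm:distinguished} extend to show that $K_\lambda$ varies continuously in operator norm in $(R_1,\dots,R_M,\lambda)$, since small translations of the singular sources $\nu_m/|x-R_m|$ produce small norm perturbations of both the diagonal and the cross pieces. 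Isolated eigenvalues of a norm-continuous family of bounded self-adjoint operators vary continuously in the parameters, so the value $\lambda_1$, characterized as the first $\lambda$ at which the top of the spectrum of $K_\lambda$ reaches $1$, is continuous throughout $\Omega$.

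\emph{Part~$(ii)$.} Set $\bar\nu=\max_m\nu_m$ attained at an index $m^*$, and $d=\min_{k\neq\ell}|R_k-R_\ell|$. The upper bound $\limsup\lambda_1\leq\sqrt{1-\bar\nu^2}$ is obtained by inserting in the min-max formula~\eqref{eq:min-max} a ground state of the single-atom operator $D_0-\nu_{m^*}/|x|$ translated to $R_{m^*}$ and smoothly cut off outside a ball of radius $r\ll d$; the Coulomb contributions from the other nuclei give an $O(1/d)$ error that vanishes. The matching lower bound is the main obstacle and is established via an IMS-type localization of the reduced quadratic form $q_\lambda$ of~\eqref{eq:def_q_lambda}. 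Take a smooth partition of unity $\chi_0^2+\sum_{m=1}^M\chi_m^2=1$ with $\chi_m$ supported in $B(R_m,d/3)$ for $m\geq 1$ and $\chi_0$ supported outside the smaller balls $B(R_m,d/4)$. Exploiting $\sum_m\chi_m\nabla\chi_m=\tfrac12\nabla(\sum_m\chi_m^2)=0$, a direct expansion yields
\begin{equation*}
q_\lambda(\phi)=\sum_{m=0}^Mq_\lambda(\chi_m\phi)-\int_{\R^3}\frac{\sum_m|\nabla\chi_m|^2}{1+\lambda+V_\mu}|\phi|^2\,dx,
\end{equation*}
the correction being of order $O(1/d^2)\|\phi\|^2$. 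On $\mathrm{supp}(\chi_m)$ with $m\geq 1$ one has $V_\mu=\nu_m/|x-R_m|+O(1/d)$, so the single-atom characterization $\lambda_1(D_0-\nu_m/|x|)=\sqrt{1-\nu_m^2}$ from~\cite{DolEstSer-00} gives $q_\lambda(\chi_m\phi)\geq -O(1/d)\|\chi_m\phi\|^2$ for $\lambda\leq\sqrt{1-\nu_m^2}$, while on $\mathrm{supp}(\chi_0)$ the potential $V_\mu$ is bounded by $O(1/d)$ and $q_\lambda(\chi_0\phi)\geq (1-\lambda-O(1/d))\|\chi_0\phi\|^2$. Combining these yields $\lambda_1\geq\min_m\sqrt{1-\nu_m^2}-O(1/d)=\sqrt{1-\bar\nu^2}+o(1)$.

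\emph{Part~$(iii)$.} Translate so that $R_1=0$ and write $\epsilon=\max_{k\neq\ell}|R_k-R_\ell|\to 0$. The potentials $V_\mu$ converge to $V_0=(\sum_m\nu_m)/|x|$ pointwise on $\R^3\setminus\{0\}$, are dominated by a constant multiple of $V_0$ outside any fixed neighborhood of the origin, and stay pointwise bounded by $C/\epsilon$ inside. The smallness assumption $\sum_m\nu_m<2/(\pi/2+2/\pi)$ combined with~\eqref{eq:condition_Tix} ensures that Tix's inequality holds for $D_0-V_0$ and, via the domination, uniformly along the family, keeping $\lambda_1>0$ throughout. The upper bound $\limsup\lambda_1\leq\sqrt{1-(\sum_m\nu_m)^2}$ follows by using a ground state of $D_0-V_0$ as a test spinor in~\eqref{eq:min-max} and applying dominated convergence to its Rayleigh quotient under $D_0-V_\mu$. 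For the matching lower bound, I would use the Birman-Schwinger representation~\eqref{eq:resolvent} to show that $K_\lambda^\epsilon=\sqrt{V_\mu}(D_0-\lambda)^{-1}\sqrt{V_\mu}$ converges in operator norm to $K_\lambda^0=\sqrt{V_0}(D_0-\lambda)^{-1}\sqrt{V_0}$ as $\epsilon\to 0$, uniformly for $\lambda$ in a compact subset of $(0,1)$. Since $\lambda_1$ is the smallest $\lambda$ at which the top of the spectrum of $K_\lambda$ reaches $1$, this convergence produces $\liminf\lambda_1\geq\sqrt{1-(\sum_m\nu_m)^2}$, matching the upper bound.
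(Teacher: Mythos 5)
Your approach diverges from the paper's in ways that matter, and two of the divergences introduce genuine gaps.

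\textbf{Part~$(i)$.} You assert that $K_\lambda=\sqrt{V_\mu}(D_0-\lambda)^{-1}\sqrt{V_\mu}$ varies \emph{norm}-continuously in the nuclear positions, calling this an extension of the one-center estimates from Theorem~\ref{thm:distinguished}. This is the crux, and it is not justified. When $R_m$ is moved to $R_m'$, the multiplication operator $\sqrt{V_\mu}-\sqrt{V_{\mu'}}$ still has Coulomb-type singularities of \emph{non-small} strength $\sim\sqrt{\nu_m}|x-R_m|^{-1/2}$ near $R_m$ and $\sim\sqrt{\nu_m}|x-R'_m|^{-1/2}$ near $R'_m$, so the perturbation of $K_\lambda$ is not small in norm; the relevant estimate $\nrm{|x|^{-1/2}|p|^{-1}|x|^{-1/2}}{}=\pi/2$ is scale-invariant and gives no decay. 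The paper does not claim norm continuity of $K_\lambda$ at all; instead it constructs a compactly supported diffeomorphism $T$ that moves each nucleus, pulls back the quadratic form $q_\lambda$ and shows $q_\lambda(\phi_T)\le q'_{\lambda-C\eps}(\phi)$, from which continuity of $\lambda_1$ follows via the min-max characterization~\eqref{eq:lambda_k_q_lambda}. Your route would require a new argument to establish what you assert.

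\textbf{Part~$(ii)$.} Your upper bound via a cut-off ground state is fine but more involved than necessary; the paper simply uses the pointwise bound $V_\mu\ge\nu_{m^*}/|x-R_{m^*}|$ and monotonicity of $\lambda^{(1)}$ in the potential, which gives the bound for \emph{all} $R$ without any limit. For the lower bound your IMS decomposition is the right strategy (it is the content of Lemma~\ref{lem:estim_q_lambda} and of~\cite[Cor.~4.7]{BosDolEst-08}, which the paper cites), but the estimate ``$q_\lambda(\chi_m\phi)\ge-O(1/d)\|\chi_m\phi\|^2$'' cannot be obtained merely because $V_\mu=\nu_m/|x-R_m|+O(1/d)$ pointwise: replacing $V_\mu$ by $\nu_m/|x-R_m|$ in the \emph{denominator} of the kinetic term produces an error controlled only by $\int|\sigma\cdot\nabla(\chi_m\phi)|^2$, which is unbounded. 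The correct fix is to absorb the $O(1/d)$ perturbation into a shift $\lambda\mapsto\lambda+C(M-1)\bar\nu/d$ of the spectral parameter (so that $q_\lambda(\chi_m\phi)\ge q^{(m)}_{\lambda+C/d}(\chi_m\phi)\ge 0$ when $\lambda\le\sqrt{1-\nu_m^2}-C/d$), and one further needs a strictly positive coercivity bound on the localized forms to absorb the $O(1/d^2)\|\phi\|^2$ IMS error; this is precisely where Lemma~\ref{lem:estim_q_lambda} is more careful than your sketch.

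\textbf{Part~$(iii)$.} You propose to prove that $K^\eps_\lambda\to K^0_\lambda$ in \emph{operator norm}, but this is almost certainly false. Even in the simplest case $M=2$, $\nu_1=\nu_2=\nu$, $R_1=0$, $R_2=R\to0$, the difference $\sqrt{V_R}-\sqrt{V_0}$ has a Coulomb-type singularity at the origin of strength $(\sqrt2-1)\sqrt{\nu}$ that does not shrink as $R\to0$, so the norm of $(\sqrt{V_R}-\sqrt{V_0})(D_0-\lambda)^{-1}\sqrt{V_R}$ does not tend to zero. The paper instead proves \emph{strong} convergence $K_{R,0}\to K_{0,0}$, uses the uniform norm bound $\|K_{R,0}\|\le\tfrac{\pi/2+2/\pi}{2}\sum\nu_m<1$ from Tix's inequality to deduce strong convergence of $(1-K_{R,0})^{-1}$, and then upgrades to norm-resolvent convergence of $D_0-V_R$ by combining this with norm convergence of $(D_0)^{-1}\sqrt{V_R}$ and the \emph{compactness} of $(D_0)^{-1}\sqrt{V_\mu}$ established in the proof of Theorem~\ref{thm:distinguished}. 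That compactness is what converts strong into norm convergence in the resolvent formula~\eqref{eq:resolvent}; without it your argument does not close.
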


The part $(iii)$ of the theorem actually holds for $\sum_{m=1}^M\nu_m<\overline{\nu}_1$ where $\overline{\nu}_1$ is a constant defined in the second part~\cite{EstLewSer-21b} of this work (denoted there simply by $\nu_1$). If $\overline\nu_1=1$ as we believe, then $(iii)$ holds for all $\sum_{m=1}^M\nu_m<1$. In~\cite{BriHog-03} the result is claimed for $M=2$ and $\nu_1=\nu_2<1/2$ but we could not fill all the details of the argument of the proof of~\cite[Lem.~3.1]{BriHog-03}. 

For $M=2$ it is a famous conjecture that the energy of a diatomic molecule is always greater than the single atom with the two nuclei merged. This property was conjectured for two-atoms Dirac operators by Klaus in~\cite[p.~478]{Klaus-80b} and by Briet-Hogreve in~\cite[Sec.~2.4]{BriHog-03}. Numerical simulations from~\cite{ArtSurIndPluSto-10,McConnell-13} seem to confirm the conjecture for $M=2$, even for large values of the nuclear charges. We make the stronger conjecture that the same holds for any $M$.

\begin{conjecture}\label{conjecture_multicenter}
We have
\begin{equation}
\lambda_1\left(D_0-\sum_{m=1}^M\frac{\nu_m}{|x-R_m|}\right)\geq \lambda_1\left(D_0-\frac{\sum_{m=1}^M\nu_m}{|x|}\right)=\sqrt{1-\left(\sum_{m=1}^M\nu_m\right)^2},
\end{equation}
for all $M\geq2$, all $R_1,...,R_M\in\R^3$ and all $\nu_m\geq0$ so that $\sum_{m=1}^M\nu_m\leq1$.
\end{conjecture}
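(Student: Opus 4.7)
This is an open conjecture, so my ``plan'' is really a discussion of natural approaches and where they break down. The non-relativistic analogue is easy because $\mu\mapsto \lambda_1(-\Delta/2-V_\mu)$ is concave (an infimum of linear functionals of $\mu$), so its minimum over $\{\mu\geq 0,\ \mu(\R^3)\leq \nu\}$ is attained at an extreme point of this weak-$*$ compact convex set, which is necessarily a Dirac mass, and translation invariance closes the argument. The most direct relativistic analogue would use the min-max of Theorem~\ref{thm:min-max} reformulated as a nonlinear eigenvalue problem for the upper spinor (following~\cite{DolEstSer-00}): letting $\Lambda(\phi,\mu)$ denote the unique $\lambda\in(-1,1]$ solving $q_\lambda(\phi)=0$ with $q_\lambda$ as in~\eqref{eq:def_q_lambda}, one has $\lambda_1(D_0-V_\mu)=\inf_\phi\Lambda(\phi,\mu)$. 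If $\Lambda(\phi,\mu)$ were concave in $\mu$ for each fixed $\phi$, the non-relativistic argument would transpose verbatim. However, the kinetic term $\int |\sigma\cdot\nabla\phi|^2/(1+\lambda+V_\mu)\,dx$ is \emph{convex} in $V_\mu$ and hence in $\mu$, which competes against the concavity of $-\int V_\mu|\phi|^2$. So this naive strategy fails, and this convex/concave mismatch is the core analytical obstruction.

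A second approach is to use rearrangement. The symmetric decreasing rearrangement of a finite positive measure collapses it into the single Dirac mass $(\sum_m\nu_m)\delta_0$, so it would suffice to show $\lambda_1(D_0-V_\mu)\geq \lambda_1(D_0-V_{\mu^*})$. I would try to rearrange $|\phi|$ and $\mu$ simultaneously and compare $\Lambda(\phi,\mu)$ with $\Lambda(\phi^*,\mu^*)$: the denominator $1+\lambda+V_\mu$ becomes more favorable after rearrangement by the Riesz inequality, and the $L^2$ terms behave well. The hard step is the numerator $|\sigma\cdot\nabla\phi|^2$: unlike the scalar case treated by P\'olya--Szeg\H o, two-component spinors do not admit a direct rearrangement inequality, and the Pauli structure of $\sigma\cdot\nabla$ makes it unclear how $|\sigma\cdot\nabla\phi|^2$ transforms under symmetrization of $|\phi|$. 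This, I believe, is the main obstacle to any rearrangement proof.

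A third, more concrete, route exploits the companion paper~\cite{EstLewSer-21b}, which for $\sum_m\nu_m<\overline\nu_1$ produces an optimizer $\mu^\star$ for $\lambda_1(\nu)$ supported on a set of zero Lebesgue measure. The plan is to argue by contradiction that this optimizer must be a single Dirac mass. If its support contained two distinct points $R\neq R'$, one would perform an infinitesimal mass transport $\mu_t=\mu^\star+t(\delta_{R'}-\delta_R)$ preserving total mass and compute the first-order variation of $\lambda_1$ by a Hellmann--Feynman formula along the associated eigenfunction; a strict sign in this derivative would contradict optimality and force the support to reduce to a point, after which translation invariance gives the stated formula and Theorem~\ref{Thm-continuity}$(ii)$--$(iii)$ identifies the value. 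The technical obstacle is that the eigenfunction is singular precisely at the support of $\mu^\star$ (and more so when some $\mu^\star(\{R\})$ is close to $1$), so the variation formula requires careful justification via the domain characterization of Theorem~\ref{thm:self-adjoint} and the Birman--Schwinger principle stated there. Even once justified, proving a strict sign seems to require a sharpened version of the Tix-type inequality~\cite{Tix-98}, which brings us back to the original difficulty and explains why this remains a conjecture.
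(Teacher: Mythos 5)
You correctly identified that this statement is labeled a \emph{conjecture} in the paper and is not proved there; the paper itself only remarks that the non-relativistic analogue follows from the concavity of $\mu\mapsto\lambda_1(-\Delta/2-V_\mu)$, that the relativistic version is open, and that a stronger conjecture implying it is discussed in the companion paper~\cite{EstLewSer-21b}. Your diagnosis of why the concavity argument does not transfer is accurate: in the reduced nonlinear eigenvalue formulation~\eqref{eq:nonlinear_eigenvalue_equation}--\eqref{eq:def_q_lambda}, the kinetic term $\int |\sigma\cdot\nabla\phi|^2/(1+\lambda+V_\mu)\,dx$ is convex in $V_\mu$, so $\Lambda(\phi,\mu)$ is no longer an infimum of affine functionals of $\mu$, and the extreme-point argument collapses. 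Your observation that there is no P\'olya--Szeg\H o--type inequality for $|\sigma\cdot\nabla\phi|^2$ pinpoints the obstruction to a rearrangement proof, and your third route via the singular optimizer constructed in~\cite{EstLewSer-21b} is essentially the direction that paper pursues (and where it also gets stuck, as you note, on a Hellmann--Feynman-type variation near the singular support together with a sharpened Tix-type bound). In short, there is no gap to report: you correctly declined to ``prove'' an open conjecture, and your survey of approaches and obstacles is both sound and consistent with the paper's own commentary.
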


In~\cite{EstLewSer-21b} we discuss a stronger conjecture which implies Conjecture~\ref{conjecture_multicenter}. Note that Conjecture~\ref{conjecture_multicenter} has been proved in the non-relativistic case, as recalled in the introduction.

%%%%%%%%%%%%%%%%%%%%%%%%%%%%%%%%%%%%%%%%%%%%%%%%%%%%%%%%
%%%%%%%%%%%%%%%%%%%%%%%%%%%%%%%%%%%%%%%%%%%%%%%%%%%%%%%%
% \section{Proofs of the main results}\label{sec:proof}
%%%%%%%%%%%%%%%%%%%%%%%%%%%%%%%%%%%%%%%%%%%%%%%%%%%%%%%%
%%%%%%%%%%%%%%%%%%%%%%%%%%%%%%%%%%%%%%%%%%%%%%%%%%%%%%%%

%%%%%%%%%%%%%%%%%%%%%%%%%%%%%%%%%%%%%%%%%%%%%%%%%%%%%%%%%%%
\section{Proof of Theorem~\ref{thm:distinguished}}\label{sec:proof_distinguished}
%%%%%%%%%%%%%%%%%%%%%%%%%%%%%%%%%%%%%%%%%%%%%%%%%%%%%%%%%%%

The proof relies on two preliminary lemmas which we first state and show, before we turn to the actual proof of the theorem. 

Loosely speaking, the first lemma asserts that $g(x)|p|^{-s}f(x)$ is compact when $f$ and $g$ have disjoint supports. Recall that everywhere $p=-i\nabla$. 

\begin{lemma}[Compactness for disjoint supports]\label{lem:compacness-disjoint-supports}
Let $f\in L^2(\R^d)$ with support in a compact set $B\subset\R^d$. Let $\Omega$ be a (bounded or unbounded) set in $\R^d$ so that ${\rm d}(B,\Omega)>0$ and let $g\in L^2_{\rm loc}(\R^d)$ supported on $\Omega$, such that 
$$\int_{\Omega}\frac{|g(x)|^2}{(1+|x|)^{2(d-s)}}\,dx<\ii,$$
where $0<s<d$.
Then the operator
$$K=g(x)\frac{1}{|p|^s}f(x)$$
is compact and its norm can be estimated by
\begin{equation}
 \norm{K}\leq C\norm{f}_{L^2(B)}\left(\int_{\Omega}\frac{|g(x)|^2}{|x|^{2(d-s)}}\,dx\right)^{\frac12},
 \label{eq:bound_norm_K}
\end{equation}
where $C$ only depends on $s$, on the dimension, on ${\rm d}(B,\Omega)$ and on $\sup_{x\in B}|x|$. 
\end{lemma}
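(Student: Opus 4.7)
The strategy is to recognise $K$ as an integral operator with a kernel which, thanks to the disjointness of the supports of $f$ and $g$, is no longer singular, and to conclude compactness via the Hilbert--Schmidt bound. No fine harmonic analysis is required; only the geometry of the supports plus the explicit form of the Riesz kernel.

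\medskip

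\noindent\emph{Step 1 (explicit kernel).} For $0<s<d$, the operator $|p|^{-s}$ on $\R^d$ is the Fourier multiplier $|\xi|^{-s}$, which coincides with convolution by the Riesz kernel $c_{s,d}|x|^{-(d-s)}$. Therefore
\[
(Ku)(x)=c_{s,d}\,g(x)\int_{B}\frac{f(y)\,u(y)}{|x-y|^{d-s}}\,dy,
\]
so $K$ is the integral operator on $L^2(\R^d)$ with kernel $K(x,y)=c_{s,d}\,g(x)f(y)|x-y|^{-(d-s)}\mathbf{1}_{\Omega}(x)\mathbf{1}_{B}(y)$. Since $\mathrm{d}(B,\Omega)=\delta>0$, we have $|x-y|\geq\delta$ on the support of the kernel, so the apparent singularity of the Riesz kernel is harmless.

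\medskip

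\noindent\emph{Step 2 (geometric lower bound).} Set $R:=\sup_{y\in B}|y|$. For $x\in\Omega$ and $y\in B$ one has $|x-y|\geq\delta$ and $|x-y|\geq|x|-R$. Splitting into the regions $\{|x|\leq 2R\}$ and $\{|x|>2R\}$ yields a uniform estimate
\[
|x-y|\;\geq\; c_0\,(1+|x|),\qquad (x,y)\in\Omega\times B,
\]
with $c_0>0$ depending only on $\delta$ and $R$. Consequently
\[
\int_{B}\frac{|f(y)|^2}{|x-y|^{2(d-s)}}\,dy\;\leq\; \frac{C\,\|f\|^2_{L^2(B)}}{(1+|x|)^{2(d-s)}}.
\]

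\medskip

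\noindent\emph{Step 3 (Hilbert--Schmidt estimate).} Fubini and the previous step give
\[
\|K\|_{HS}^2\;=\;c_{s,d}^2\int_{\Omega}|g(x)|^2\!\int_{B}\frac{|f(y)|^2}{|x-y|^{2(d-s)}}\,dy\,dx\;\leq\; C\,\|f\|_{L^2(B)}^{\,2}\int_{\Omega}\frac{|g(x)|^2}{(1+|x|)^{2(d-s)}}\,dx,
\]
which is finite by hypothesis. Hence $K$ is Hilbert--Schmidt, and in particular compact. The norm estimate~\eqref{eq:bound_norm_K} follows once one notes that on the region $\{|x|>2R\}$ one may replace $1+|x|$ by $|x|/2$, while on $\{|x|\leq 2R\}$ the two weights are comparable up to a constant depending only on $R$; the overall constant therefore depends solely on $s,d,\delta$ and $R$, exactly as claimed.

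\medskip

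There is no substantial obstacle: the disjointness assumption removes the only delicate feature of the Riesz kernel (its local singularity), reducing everything to a pointwise domination of the kernel by a product of an $L^2$ function of $x$ and an $L^2$ function of $y$. The only point deserving care is the uniform geometric inequality $|x-y|\gtrsim 1+|x|$ on $\Omega\times B$, which is what forces the constant $C$ to depend on $\mathrm{d}(B,\Omega)$ and on $\sup_{x\in B}|x|$.
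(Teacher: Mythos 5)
Your proof is correct and follows the same strategy as the paper's: identify the explicit Riesz kernel of $|p|^{-s}$, use the disjointness of supports and the compactness of $B$ to obtain the pointwise geometric estimate $|x-y|\geq c_0(1+|x|)$ on $\Omega\times B$, and conclude from the resulting separable pointwise bound on the kernel. The only cosmetic difference is in the last step: the paper infers compactness via a weak-convergence plus dominated-convergence argument (and proves the norm bound by comparison with a rank-one kernel), whereas you observe directly that the kernel bound makes $K$ Hilbert--Schmidt, giving compactness and the norm estimate simultaneously. These are two routes through the same idea, since the rank-one dominating kernel of the paper \emph{is} the Hilbert--Schmidt bound. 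One small remark: the final paragraph about replacing $1+|x|$ by $|x|/2$ on $\{|x|>2R\}$ is not needed to pass from your estimate to the stated bound~\eqref{eq:bound_norm_K}; since $(1+|x|)^{-2(d-s)}\leq|x|^{-2(d-s)}$, the bound with $|x|$ in the denominator follows trivially from the one with $1+|x|$.
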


\begin{proof}
The kernel of the operator $K$ is given by
$$K(x,y)=\kappa\frac{\1_\Omega(x)g(x)f(y)}{|x-y|^{d-s}}$$
for some constant $\kappa$. Since the two functions $f$ and $g$ have supports at a finite distance from each other, we have $|x-y|\geq {\rm d}(B,\Omega)>0$ and the denominator never vanishes. Since $B$ is compact, we even have $|x-y|\geq |x|-R$ where $R=\max_{y\in B}|y|$. In particular, we conclude that $|x-y|\geq c(|x|+1)$ for all $x\in \Omega$ and all $y\in B$. Therefore, the kernel of $K$ is pointwise bounded by
$$|K(x,y)|\leq \frac{\kappa}{c^{d-s}}\frac{|g(x)|}{(1+|x|)^{d-s}}|f(y)|.$$
The right side is a rank-one operator which is bounded under the condition that $f\in L^2$ and $g(1+|x|)^{s-d}\in L^2$. This shows that $K$ is bounded as in~\eqref{eq:bound_norm_K}. 

To prove the compactness of $K$ we can approximate $f$ and $g$ by functions in $C^\ii_c$ and use classical compactness results. But we can also argue directly as follows. Let $u_n\wto0$ be any sequence converging weakly to 0 with $\|u_n\|_{L^2}=1$. We remark that 
$$(Ku_n)(x)=\kappa\;g(x)\int_{B}\frac{f(y)u_n(y)}{|x-y|^{d-s}}\,dy$$
converges to 0 almost everywhere, since $y\mapsto f(y)|x-y|^{s-d}$ belongs to $L^2$ for all $x\in\Omega$. In addition, we have the pointwise bound
$$|(Ku_n)(x)|\leq \frac{\kappa}{c^{d-s}}\frac{|g(x)|}{(1+|x|)^{d-s}}\norm{f}_{L^2(B)}$$
which, by the dominated convergence theorem, implies that $\|Ku_n\|\to0$. 
\end{proof}

Using Lemma~\ref{lem:compacness-disjoint-supports} we can show the following result.

\begin{lemma}[Local compactness in the absence of atoms]\label{lem:compactness-no-atom}
Let $\tilde\mu\geq0$ be a finite Radon measure on $\R^3$, with no atom. Then 
$$\1_{B_R}\sqrt{\tilde\mu\ast\frac1{|x|}}\frac{1}{|p|^{\frac12}}$$
is a compact operator for every finite $R>0$. 
\end{lemma}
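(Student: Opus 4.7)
I would prove compactness of $T:=\1_{B_R}\sqrt{V_{\tilde\mu}}\,|p|^{-1/2}$ by a weak--convergence argument, with the absence of atoms entering through dominated convergence in the $y$--variable of $\tilde\mu$.

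The starting point is the scalar Herbst/Kato inequality $\int|v|^2/|x-y|\,dx\leq(\pi/2)\||p|^{1/2}v\|^2$, which, integrated against any positive finite Radon measure $\nu$ on $\R^3$, yields $\|\sqrt{V_\nu}\,|p|^{-1/2}\|^2\leq\tfrac{\pi}{2}\,\nu(\R^3)$; in particular $T$ is bounded. Given a sequence $u_n\wto 0$ in $L^2(\R^3)$ with $\|u_n\|\leq 1$, set $v_n:=|p|^{-1/2}u_n\wto 0$ in $\dot H^{1/2}(\R^3)$, so that $v_n\to 0$ strongly in $L^p_{\mathrm{loc}}(\R^3)$ for every $p<3$ by Rellich--Kondrachov. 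Fubini together with Herbst/Kato gives
\[
\|Tu_n\|^2=\int_{\R^3}I_y(v_n)\,d\tilde\mu(y),\qquad I_y(v_n):=\int_{B_R}\frac{|v_n(x)|^2}{|x-y|}\,dx\leq\tfrac{\pi}{2},
\]
so dominated convergence on the finite measure $\tilde\mu$ will force $\|Tu_n\|\to 0$ provided $I_y(v_n)\to 0$ for $\tilde\mu$-a.e.\ $y$.

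This pointwise limit is immediate when $y\notin\overline{B_R}$, since the kernel $|x-y|^{-1}$ is then bounded on $B_R$ and Rellich yields $v_n\to 0$ strongly in $L^2(B_R)$. For $y\in\overline{B_R}$ one splits $I_y(v_n)=\int_{B_R\setminus B_\eps(y)}+\int_{B_\eps(y)}$; the far piece again vanishes by Rellich, and the near-$y$ piece is handled via the Sobolev--Lorentz embedding $\dot H^{1/2}(\R^3)\hookrightarrow L^{3,2}(\R^3)$ together with the endpoint H\"older bound $L^{3,\infty}\cdot L^{3/2,1}\hookrightarrow L^1$. A standard concentration--compactness analysis at the critical Sobolev exponent then shows that this near piece vanishes as $n\to\ii$ except when $y$ belongs to an at-most-countable set of $\dot H^{1/2}$-concentration points of the sequence $(v_n)$. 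Crucially, this exceptional set has $\tilde\mu$-measure zero precisely because $\tilde\mu$ has no atoms---and this is where the hypothesis on $\tilde\mu$ enters.

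The main obstacle is the concentration--compactness analysis at the endpoint embedding $\dot H^{1/2}\hookrightarrow L^{3,2}$ required to handle the critical Riesz kernel $|x-y|^{-1}\in L^{3,\infty}_{\mathrm{loc}}$: away from the bubbling points one must extract enough local strong convergence of $(v_n)$ in a Lorentz space fine enough to absorb that kernel. Once this is in place, the atomless hypothesis closes the dominated convergence argument and delivers the compactness of $T$.
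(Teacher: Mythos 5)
Your approach is genuinely different from the paper's. The paper first reduces to $\tilde\mu$ of compact support, then tiles $\R^3$ with cubes of side $r$ and $3r$ chosen so that $\tilde\mu(C_j)\leq\eps$ on every cube (possible precisely because $\tilde\mu$ has no atom); the cross terms between distant cubes are compact by Lemma~\ref{lem:compacness-disjoint-supports} and Cwikel's theorem, while the block-diagonal remainder has norm $\lesssim\eps$ by Kato's inequality, so the operator is a norm limit of compacts. You argue dually: feed in a weakly-null $u_n$, set $v_n=|p|^{-1/2}u_n$, use Fubini to write $\|Tu_n\|^2=\int I_y(v_n)\,d\tilde\mu(y)$ with $I_y(v_n)=\int_{B_R}|v_n|^2|x-y|^{-1}dx$ uniformly bounded, and try to show that $I_y(v_n)\to0$ for all $y$ outside a countable set, which is $\tilde\mu$-null by atomlessness. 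The Fubini and dominated-convergence steps, and the way atomlessness enters via a countable exceptional set, are correct and appealing.

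The gap is in the step you yourself flag. You need $I_y(v_n)\to0$ for all but countably many $y$, and you propose to get it from a concentration-compactness statement for the Lorentz embedding $\dot{H}^{1/2}(\R^3)\hookrightarrow L^{3,2}(\R^3)$. But the standard Lions theory for $v_n\wto0$ in $\dot{H}^{1/2}$ gives atomicity of the $L^3$ defect measure, i.e., smallness of $\limsup_n\|v_n\|_{L^3(B_\eps(y))}$ away from a countable set; your Hölder pairing of $|x-y|^{-1}\in L^{3,\ii}$ against $|v_n|^2$ requires instead smallness of $\|v_n\|_{L^{3,2}(B_\eps(y))}$, where $L^{3,2}\subsetneq L^3$ is a strictly stronger norm that is \emph{not} controlled by the $L^3$ norm, even in the presence of a uniform global $L^{3,2}$ bound. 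I am not aware of a ``standard'' concentration-compactness result at the $L^{3,2}$ endpoint, and you do not supply one. The clean route here would be to control the localized $\dot{H}^{1/2}$ norm $\|\chi_{\eps,y}v_n\|_{\dot{H}^{1/2}}$ (and then apply Kato's inequality to get $I_y$ small); but $\dot{H}^{1/2}$ is nonlocal, so that requires estimating commutators like $[\chi_{\eps,y},|p|^{1/4}]$, which your outline does not address. Until either the Lorentz-CC statement or the commutator control is actually established, the argument has a genuine hole, whereas the paper's tiling with pointwise kernel bounds avoids concentration-compactness altogether and is considerably more elementary.
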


Note that the operator $\sqrt{V_{\tilde\mu}}|p|^{-1/2}$ is not compact since at infinity it essentially behaves like $\sqrt{\tilde\mu(\R^3)}|x|^{-1/2}|p|^{-1/2}$ which is not compact. The characteristic function $\1_{B_R}$ is really necessary. In addition, the corresponding result cannot hold for a measure which comprises some deltas, due to the lack of compactness at the corresponding centers. 

\begin{proof}[Proof of Lemma~\ref{lem:compactness-no-atom}]
First we write $\tilde\mu=\tilde\mu \1_{B_N}+\tilde\mu \1_{\R^3\setminus B_N}$  where $B_N$ is the ball of radius $N$, and remark that 
\begin{multline}
\norm{\left(\sqrt{\tilde\mu\ast\frac1{|x|}}-\sqrt{(\tilde\mu\1_{B_N}) \ast\frac1{|x|}}\right)\frac{1}{|p|^{\frac12}}}\\
\leq \norm{\sqrt{(\tilde\mu\1_{\R^3\setminus B_N})\ast\frac1{|x|}}\frac{1}{|p|^{\frac12}}}\leq \sqrt{\frac\pi2 \tilde\mu(\R^3\setminus B_N)}. 
\label{eq:cut_mu_tilde}
\end{multline}
The first inequality holds because $\sqrt{V_{\mu}}-\sqrt{V_{\mu_1}}\leq \sqrt{V_{\mu_2}}$ pointwise, for $\mu=\mu_1+\mu_2$. The second uses Kato's inequality
\begin{equation}
 \frac{1}{|x|}\leq\frac\pi2 |p|
 \label{eq:Kato}
\end{equation}
which implies that 
$$\norm{\sqrt{\mu\ast\frac1{|x|}}\frac1{|p|^{\frac12}}}=\norm{\frac1{|p|^{\frac12}}\sqrt{\mu\ast\frac1{|x|}}}\leq \sqrt{\frac\pi2}\sqrt{\mu(\R^3)}$$
for every bounded measure $\mu$. The right side of~\eqref{eq:cut_mu_tilde} tends to zero when $N\to\ii$ and this shows that we may assume for the rest of the proof that $\tilde\mu$ has compact support. 

For $r>0$, let us consider two tilings of the whole space $\R^3$ with cubes $C_j=3r(j+[-1/2,1/2)^3)$ and $C'_k=r(k+[-1/2,1/2)^3)$ of side length $3r$ and $r$, respectively, where $j,k\in\Z^3$. For every $k$, we call $j_k$ the index of the large cube $C_{j_k}$ of which $C'_k$ is exactly at the center. Let $\eps>0$. By compactness of the support of $\tilde\mu$ and the fact that it has no atom, we can find $r>0$ so that $\tilde\mu(C_j)\leq\eps$ for every $j$. 

We then write
$$\frac1{|p|^{\frac12}}\1_{B_R}\left(\tilde\mu\ast\frac1{|x|}\right)\frac1{|p|^{\frac12}}=\sum_{k}\frac1{|p|^{\frac12}}\1_{B_R\cap C'_k}\left(\tilde\mu(\1_{C_{j_k}}+\1_{\R^3\setminus C_{j_k}})\ast\frac1{|x|}\right)\frac1{|p|^{\frac12}}.$$
Recall that the sum is finite.
The sets $C_k'$ and $\R^3\setminus C_{j_k}$ are at a distance at least equal to $r$ from each other. Then 
$$\left|\1_{C'_k}\left(\tilde\mu\1_{\R^3\setminus C_{j_k}}\ast\frac1{|x|}\right)\right|\leq\frac{c}{r}.$$
In particular this function is in $L^p(C'_k)$ for all $1\leq p\leq\ii$ and the operator 
$$\frac1{|p|^{\frac12}}\1_{B_R\cap C'_k}\left(\tilde\mu\1_{\R^3\setminus C_{j_k}}\ast\frac1{|x|}\right)\frac1{|p|^{\frac12}}$$
is compact. This is true for instance because $|p|^{-1/2}f(x)|p|^{-1/2}$ is compact under the condition that $f\in L^3(\R^3)$, by Cwikel's inequality~\cite{Simon-79}. Hence, at this step we have written
$$\frac1{|p|^{\frac12}}\1_{B_R}\left(\tilde\mu\ast\frac1{|x|}\right)\frac1{|p|^{\frac12}}=\sum_{k}\frac1{|p|^{\frac12}}\1_{B_R\cap C'_{k}}\left(\tilde\mu\1_{C_{j_k}}\ast\frac1{|x|}\right)\frac1{|p|^{\frac12}}+K_1$$
where $K_1$ is compact. 

Next, for every $k$ we insert another localization as follows
\begin{multline*}
\frac1{|p|^{\frac12}}\1_{C'_{k}}\left(\tilde\mu\1_{C_{j_k}}\ast\frac1{|x|}\right)\frac1{|p|^{\frac12}}\\=(\1_{C_{j_k}}+\1_{\R^3\setminus C_{j_k}})\frac1{|p|^{\frac12}}\1_{C'_{k}}\left(\tilde\mu\1_{C_{j_k}}\ast\frac1{|x|}\right)\frac1{|p|^{\frac12}} (\1_{C_{j_k}}+\1_{\R^3\setminus C_{j_k}}).
\end{multline*}
The operator 
$$\1_{\R^3\setminus C_{j_k}}\frac1{|p|^{\frac12}}\1_{C'_k}\sqrt{\tilde\mu\1_{C_{j_k}}\ast\frac1{|x|}}$$
is compact, by Lemma~\ref{lem:compacness-disjoint-supports}. Indeed, we have 
$$\1_{C'_k}\left(\tilde\mu\1_{C_{j_k}}\ast\frac1{|x|}\right)=\1_{C'_k}\left(\tilde\mu\1_{C_{j_k}}\ast\frac{\1_{C_{j_k}+C'_k}}{|x|}\right)\in L^1(C'_k)$$
so that its square root is in $L^2(C'_k)$, and
$$\int_{\R^3\setminus C_{j_k}}\frac{dx}{(1+|x|)^{5}}<\ii.$$
This proves that 
$$\frac1{|p|^{\frac12}}\1_{B_R}\left(\tilde\mu\ast\frac1{|x|}\right)\frac1{|p|^{\frac12}}=\sum_{k}\1_{C_{j_k}}\frac1{|p|^{\frac12}}\1_{B_R\cap C'_k}\left(\tilde\mu\1_{C_{j_k}}\ast\frac1{|x|}\right)\frac1{|p|^{\frac12}}\1_{C_{j_k}}+K_2$$
where $K_2$ is compact. By Kato's inequality~\eqref{eq:Kato} the first operator is bounded above as follows:
\begin{multline*}
\sum_{k}\1_{C_{j_k}}\frac1{|p|^{\frac12}}\1_{B_R\cap C'_k}\left(\tilde\mu\1_{C_{j_k}}\ast\frac1{|x|}\right)\frac1{|p|^{\frac12}}\1_{C_{j_k}}\\
\leq \frac\pi2 \sum_{\substack{k\ :\\ C_k\cap B_R\neq\emptyset}}\1_{C_{j_k}}\tilde\mu(C_{j_k})\leq \eps\frac\pi2  \sum_{\substack{k\ :\\ C_k\cap B_R\neq\emptyset}}\1_{C_{j_k}}\leq \frac{27\pi}2 \eps. 
\end{multline*}
Here we have used that $\tilde\mu(C_j)\leq\eps$ for every $j$ by our choice of $r$. Therefore our initial operator is the norm-limit of a sequence of compact operators and it must be compact. 
\end{proof}

Now we can provide the

\begin{proof}[Proof of Theorem~\ref{thm:distinguished}]
Our goal is to show that 
\begin{equation}
\limsup_{|s|\to\ii} \norm{\sqrt{|V_\mu|}\frac{1}{D_0+is}\sqrt{|V_\mu|}}\leq \max_{R\in\R^3}|\mu(\{R\})|<1,
 \label{eq:to_be_estimated}
\end{equation}
where we recall that $V_\mu:=\mu\ast|x|^{-1}$. We write $\mu$ in the form
$$\mu=\sum_{m=1}^\ii\nu_m\delta_{R_m}+\tilde\mu$$
where it is understood that the $R_m$ are all distinct and where $\max_m|\nu_m|<1$ by assumption. The signed measure $\tilde\mu$ has no atom. Here we allow infinitely many singularities for simplicity of notation but many of the $\nu_m$ could vanish. We know that $\sum_{m=1}^\ii|\nu_m|<\ii$. 

Similarly as in~\eqref{eq:cut_mu_tilde} we can first write
\begin{equation}
 \mu=\left(\sum_{m=1}^K\nu_m\delta_{R_m}+\tilde\mu\1_{B_N}\right)+\left(\sum_{m\geq K+1}\nu_m\delta_{R_m}+\tilde\mu\1_{\R^3\setminus B_N}\right):=\mu_1+\mu_2.
 \label{eq:decompose_mu}
\end{equation}
Using Kato's inequality~\eqref{eq:Kato} and the fact that $|\mu_2|(\R^3)$ is small for $K$ and $N$ large enough, we see that it suffices to show the limit~\eqref{eq:to_be_estimated} for $\mu$ having finitely many atoms and for $\tilde\mu$ of compact support, which we assume for the rest of the proof. In this case we have gained that 
$$|V_{\mu}(x)|=\left|\int_{\R^3}\frac{d\mu(y)}{|x-y|}\right|\leq \frac{|\mu|(\R^3)}{|x|-N}$$
where ${\rm supp}(\mu)\subset B_N$. In particular, $V_\mu$ is bounded at infinity. 

We then write the potential $V_\mu$ in the form
$$V_\mu=\sum_{m=1}^K\frac{\nu_m\1_{B_\eta(R_m)}}{|x-R_m|}+\1_{B_R}V_{\tilde\mu}+\sum_{m=1}^K\frac{\nu_m\1_{\R^3\setminus B_\eta(R_m)}}{|x-R_m|}+\1_{\R^3\setminus B_R}V_{\tilde\mu}$$
where $B_\eta(R_m)$ is the ball of radius $\eta$ centered at $R_m$. We choose $\eta<\min_{1\leq m\neq \ell\leq K}|R_m-R_\ell|/2$ (half of the smallest distance between the nuclei). 
Using that $\sqrt{\sum V_j}\leq \sum\sqrt{V_j}$, this shows that 
\begin{equation}
|V_\mu|^{\frac12}\leq \sum_{m=1}^K\frac{\sqrt{|\nu_m|}\1_{B_\eta(R_m)}}{|x-R_m|^{\frac12}}+\1_{B_R}\sqrt{V_{|\tilde\mu|}}+\sqrt{\sum_{m=1}^K\frac{|\nu_m|}{\eta}}+\sqrt{\frac{|\tilde\mu|(\R^3)}{R-N}},
\label{eq:decomp_sqrt_V_mu}
\end{equation}
because $R$ was chosen larger than $N$. Next we replace $\sqrt{|V_\mu|}$ by the function on the right in 
$$\norm{\sqrt{|V_\mu|}\frac{1}{D_0+is}\sqrt{|V_\mu|}},$$
expand everything and estimate all the terms separately.

The dominant terms are the ones close to the singularities
$$\sum_{m=1}^K|\nu_m|\frac{\1_{B_\eta(R_m)}}{|x-R_m|^{\frac12}}\frac{1}{D_0+is}\frac{\1_{B_\eta(R_m)}}{|x-R_m|^{\frac12}}.$$
It is time to recall that for Coulomb potentials 
\begin{multline}
\Spec\left(\frac{1}{|x|^{\frac12}}\frac{1}{\alpha\cdot p+\beta}\frac{1}{|x|^{\frac12}}\right)=\Spec_{\rm ess}\left(\frac{1}{|x|^{\frac12}}\frac{1}{\alpha\cdot p+\beta}\frac{1}{|x|^{\frac12}}\right)\\=\Spec\left(\frac{1}{|x|^{\frac12}}\frac{1}{\alpha\cdot p}\frac{1}{|x|^{\frac12}}\right)=\Spec_{\rm ess}\left(\frac{1}{|x|^{\frac12}}\frac{1}{\alpha\cdot p}\frac{1}{|x|^{\frac12}}\right)=[-1,1] 
\label{eq:rappel_Coulomb}
\end{multline}
and that 
\begin{equation}
\norm{\frac{1}{|x|^{\frac12}}\frac{1}{\alpha\cdot p+is}\frac{1}{|x|^{\frac12}}}=\norm{\frac{1}{|x|^{\frac12}}\frac{1}{\alpha\cdot p+\beta+is}\frac{1}{|x|^{\frac12}}}=1
\label{eq:rappel_Coulomb_2}
\end{equation}
for all $s\in\R$. See~\cite{Nenciu-76,Wust-77,Klaus-80b,Kato-83,ArrDuoVeg-13}. This implies immediately that 
\begin{multline*}
\norm{\sum_{m=1}^K|\nu_m|\frac{\1_{B_\eta(R_m)}}{|x-R_m|^{\frac12}}\frac{1}{D_0+is}\frac{\1_{B_\eta(R_m)}}{|x-R_m|^{\frac12}}}\\
\leq\max_m|\nu_m|\norm{\sum_{m=1}^K\1_{B_\eta(R_m)}}=\max_m|\nu_m|<1.
\end{multline*}
Note that the estimate does not depend on $K$ but requires $\eta$ to be small enough to guarantee that the balls do not overlap. The choice of $\eta$ depends on the smallest distance between the nuclei.

All the other terms are going to be small for $s$ large enough. First, using that 
$$\frac{|p|^{\frac12}}{|D_0+is|}=\frac{|p|^{\frac12}}{\sqrt{p^2+1+s^2}}\leq\frac{1}{\sqrt{2|s|}}$$
we see that all the terms involving the constant potential 
$$\sqrt{\sum_{m=1}^K\frac{|\nu_m|}{\eta}}+\sqrt{\frac{|\tilde\mu|(\R^3)}{R-N}}$$
in~\eqref{eq:decomp_sqrt_V_mu} have a norm of the order $O(|s|^{-1/2})$. The terms involving $\1_{B_R}\sqrt{V_{\tilde\mu}}$ can all be written in the form
$$\sqrt{|V_{\mu'}|}\frac1{|p|^{\frac12}}\frac{|p|}{D_0+is}\frac1{|p|^{\frac12}}\sqrt{|V_{\tilde\mu}|}\1_{B_R}$$
for some measure $\mu'$. 
They all tend to 0 in norm when $s\to0$. This is because $BA_sK$ tends to zero in norm when $B$ is bounded, $K$ is compact and $A_s\to0$ strongly with $\|A_s\|\leq C$ uniformly in $s$. The operator $|p|^{-1/2}\sqrt{|V_{\tilde\mu}|}\1_{B_R}$ is compact by Lemma~\ref{lem:compactness-no-atom}.

We are left with the more complicated interaction terms between the balls
\begin{equation}
\frac{\1_{B_\eta(R_k)}}{|x-R_k|^{\frac12}}\frac{1}{D_0+is}\frac{\1_{B_\eta(R_m)}}{|x-R_m|^{\frac12}}
\label{eq:interaction_two_balls}
\end{equation}
with $k\neq m$. We have
$$\frac{1}{D_0+is}=\frac{\alpha\cdot p}{p^2+1+s^2}+\frac{\beta}{p^2+1+s^2}-i\frac{s}{p^2+1+s^2}$$
and the term involving $\beta$ is easily controlled by $|p|^{-1}(1+s^2)^{-1/2}$ and Kato's inequality. Like in Lemma~\ref{lem:compacness-disjoint-supports}, our idea to deal with the other two terms is to use \emph{pointwise} kernel bounds. Note that operator bounds are not very useful here since we have different functions on both sides of $(D_0+is)^{-1}$. Recall also that $|A(x,y)|\leq B(x,y)$ implies $\|A\|\leq\|B\|$. The kernel of the first operator is
\begin{multline*}
\frac{\alpha\cdot p}{p^2+1+s^2}(x,y)=i\frac{\alpha\cdot(x-y)}{4\pi|x-y|^3}e^{-\sqrt{1+s^2}|x-y|}\\+i\sqrt{1+s^2}\frac{\alpha\cdot(x-y)}{4\pi|x-y|^2}e^{-\sqrt{1+s^2}|x-y|}
\end{multline*}
and it can be bounded by
$$\left|\frac{\alpha\cdot p}{p^2+1+s^2}(x,y)\right|\leq \frac{e^{-\sqrt{1+s^2}|x-y|}}{4\pi|x-y|^2}+\sqrt{1+s^2}\frac{e^{-\sqrt{1+s^2}|x-y|}}{4\pi|x-y|}\leq \frac{C}{|s|^{\frac12}|x-y|^{\frac52}}.$$
Similarly, we have
$$\left|\frac{is}{p^2+1+s^2}(x,y)\right|=\frac{|s|\;e^{-\sqrt{1+s^2}|x-y|}}{4\pi|x-y|}\leq \frac{C}{|s|^{\frac12}|x-y|^{\frac52}}.$$
We obtain that 
\begin{multline}
\norm{\frac{\1_{B_\eta(R_k)}}{|x-R_k|^{\frac12}}\frac{1}{D_0+is}\frac{\1_{B_\eta(R_m)}}{|x-R_m|^{\frac12}}}\\
\leq \frac{C}{|s|^{\frac12}}\norm{\frac{\1_{B_\eta(R_k)}}{|x-R_k|^{\frac12}}\frac{1}{|p|^{\frac12}}\frac{\1_{B_\eta(R_m)}}{|x-R_m|^{\frac12}}}+\frac{C}{|s|^{\frac12}}.
\end{multline}
Using Lemma~\ref{lem:compacness-disjoint-supports} the norm on the right is finite and we conclude that the interactions between balls are a $O(|s|^{-1/2})$. This concludes the proof of~\eqref{eq:to_be_estimated}.

By the result of Nenciu~\cite[Cor.~2.1]{Nenciu-76} (see also Klaus~\cite{Klaus-80b}) based on the resolvent expansion~\eqref{eq:resolvent}, the estimate~\eqref{eq:to_be_estimated} proves that $D_0-V_\mu$ has a unique self-adjoint extension whose domain is included in $H^{1/2}(\R^3)$. In addition, $D_0-V_\mu\1(|V_\mu|\leq n)$ converges to $D_0-V_\mu$ in the norm-resolvent sense when $n\to\ii$~\cite{KlaWus-79,Klaus-80b}. That the essential spectrum is equal to $(-\ii,-1]\cup[1,\ii)$ is also a consequence of the resolvent formula~\eqref{eq:resolvent} as in~\cite{Nenciu-76,KlaWus-79}. This is because
$$\frac{1}{D_0-z}\sqrt{|V_\mu|}$$
is compact for $z\in\C\setminus\sigma(D_0)$. This follows from the Hausdorff-Young inequality and the fact that $\sqrt{|V_\mu|}\in (L^{6-\eps}+L^{6+\eps})(\R^3)$ whereas $(\alpha\cdot p+\beta -z)^{-1}\in L^r(\R^3)$ for all $r>3$, hence belongs to $(L^{6-\eps}\cap L^{6+\eps})(\R^3)$. 

It remains to prove the statement about the gradient of functions in the domain. We call $R_1,...,R_K$ all the points such that $|\mu(\{R_k\})|\geq1/2$ and deduce that for every $x_0\in\R^3\setminus\{R_1,...,R_K\}$, we have 
$$\lim_{r\to0}|\mu|(B_r(x_0))<\frac12$$
where $B_r(x_0)$ is the ball of radius $r$ centered at $x_0$. Let $\chi$ be a smooth function supported on $B_{1/2}(0)$ and let $\chi_r(x)=\chi((x-x_0)/r)$. Every $\Psi$ in the domain of $D_0-\mu\ast|x|^{-1}$ satisfies
$$D_0\Psi-V_\mu\Psi=\Phi\in L^2(\R^3)$$
in $H^{-1/2}$, so that 
$$D_0(\chi_r\Psi)-V_\mu\chi_r\Psi=\chi_r\Phi-i(\alpha\cdot\nabla\chi_r)\Psi\in L^2(\R^3).$$
We decompose $\mu=\mu\1_{B_r}+\mu\1_{\R^3\setminus B_r}$ and use that 
$$\left|V_{\mu\1_{\R^3\setminus B_r}}\right|\leq \frac{2|\mu|(\R^3)}{r}\qquad\text{on $B_{r/2}$.}$$
This gives
$$\left(D_0-V_{\mu\1_{B_r}}\right)\chi_r\Psi\in L^2(\R^3).$$
For $\mu(B_r)<1/2$, the operator $D_0-V_{\mu\1_{B_r}}$ is self-adjoint on $H^1(\R^3)$ by Hardy's inequality. This proves, as stated, that $\chi_r\Psi\in H^1(\R^3)$. Using that $\mu(\R^3\setminus B_R)\to0$ when $R\to\ii$ we can prove in a similar manner that $(1-\chi_R)\Psi\in H^1(\R^3)$. We obtain the claim by covering $\R^3\setminus\cup_{j=1}^KB_r(R_j)$ with finitely many balls together with the complement of a large ball. This concludes the proof of Theorem~\ref{thm:distinguished}.
\end{proof}

%%%%%%%%%%%%%%%%%%%%%%%%%%%%%%%%%%%%%%%%%%%%%%%%%%%%%%%%%%%%%%%%
\section{Proof of Theorem~\ref{thm:def_cV_mu}}\label{sec:proof_cV_mu}
%%%%%%%%%%%%%%%%%%%%%%%%%%%%%%%%%%%%%%%%%%%%%%%%%%%%%%%%%%%%%%%%

\subsubsection*{Step 1. Proof of the estimate~\eqref{eq:in_H_1/2} on $\|\cdot\|_{\cV_\mu}$}
The upper bound in~\eqref{eq:in_H_1/2} follows immediately from the fact that $(1+V_\mu)^{-1}\leq1$ and we concentrate on proving the lower bound. Let $\mu$ be a finite non-negative measure on $\R^3$ and $V_\mu:=\mu\ast|\cdot|^{-1}$. Then, by Hardy's inequality $|x|^{-2}\leq 4(-\Delta)\leq 4(D_0)^2$, we have 
$$\norm{V_\mu\frac{1}{D_0}}\leq 2\mu(\R^3).$$
We also have 
$$\norm{|p|(\beta+1)\frac1{D_0}}\leq \sup_{p\in\R^3}\frac{2|p|}{\sqrt{1+|p|^2}}\leq\sqrt2.$$
By the Rellich-Kato theorem, this proves that the operator
$$D_0-\frac{V_\mu}{8\mu(\R^3)}-|p|\frac{\beta+1}{4}$$
is self-adjoint on $H^1(\R^3)$ and that $0$ is not in its spectrum, with a universal estimate on the gap around the origin.  For the same reason, the operator 
$$D_0-\frac{tV_\mu}{8\mu(\R^3)}-|p|\frac{\beta+1}{4}$$
has a gap around the origin at least as big as when $t=1$, for all $t\in[0,1]$. Note that $|p|(\beta+1)/4$ only acts on the upper spinor. Restricted to lower spinors, the quadratic form associated with this operator is just $-1-tV_\mu/(8\mu(R^3))\leq -1$. From the min-max principle and a continuation argument in $t$ from~\cite{DolEstSer-00}, the fact that $0$ is never in the spectrum is equivalent to saying that 
\begin{multline*}
\int_{\R^3}\frac{|\sigma\cdot\nabla\phi(x)|^2}{1+\frac{V_\mu(x)}{8\mu(\R^3)}}\,dx\geq \frac{1}{8\mu(\R^3)}\int_{\R^3}V_\mu(x)|\phi(x)|^2\,dx\\
+\frac12\pscal{\phi,|p|\phi}-\frac12\norm{\phi}_{L^2(\R^3)}^2
\end{multline*}
for all $\phi\in H^1(\R^3,\C^2)$ and all non-negative finite measures $\mu$ over $\R^3$. Dropping the potential term and using the inequality
$$\frac{1}{1+a}\geq \frac{1}{\max(1,b)\left(1+\frac{a}{b}\right)}$$
gives
\begin{equation}
\int_{\R^3}\frac{|\sigma\cdot\nabla\phi(x)|^2}{1+V_\mu(x)}\,dx\geq \frac{\pscal{\phi,|p|\phi}-\norm{\phi}_{L^2(\R^3)}^2}{2\max\big(1,8\mu(\R^3)\big)},\qquad \forall \phi\in H^1(\R^3,\C^2).
\label{eq:lower_bound_cV}
\end{equation}
Denoting $M=2\max\big(1,8\mu(\R^3)\big)\geq2$, we have
$$M\int_{\R^3}\frac{|\sigma\cdot\nabla\phi(x)|^2}{1+V_\mu(x)}\,dx\geq \pscal{\phi,|p|\phi}-\norm{\phi}_{L^2(\R^3)}^2\geq \pscal{\phi,|p|\phi}-(M-1)\norm{\phi}_{L^2(\R^3)}^2$$
so that 
\begin{equation}
 \int_{\R^3}\frac{|\sigma\cdot\nabla\phi(x)|^2}{1+V_\mu(x)}\,dx+\norm{\phi}_{L^2(\R^3)}^2\geq \frac{\pscal{\phi,|p|\phi}+\norm{\phi}_{L^2(\R^3)}^2}{M}
 \label{eq:estim_quadratic_form}
\end{equation}
which is the left side of~\eqref{eq:in_H_1/2}. The exact constant in this inequality is not important, but it is crucial that it only depends on $\mu$ through its mass $\mu(\R^3)$. 

The quadratic form on the left side of~\eqref{eq:estim_quadratic_form} defined on $C^\ii_c(\R^d,\C^2)$ is closable in the Hilbert space $L^2(\R^3,\C^2)$. This follows from the recent abstract result~\cite[Lem.~9]{SchSolTok-20}, which settles a delicate issue neglected in several previous papers on the subject. An alternative argument in our case is to use that the associated operator
$$-\sigma\cdot\nabla\frac{1}{1+V_\mu}\sigma\cdot\nabla+1$$
is well defined and symmetric in the domain $C^\ii_c(\R^d,\C^2)$, by Lemma~\ref{lem:V_mu} below. Then one can use~\cite[Thm.~X.23]{ReeSim2}. 

Finally, the domain of the closure is automatically a subspace of $H^{\frac12}(\R^3,\C^2)$ by~\eqref{eq:estim_quadratic_form}. Due to the upper bound in~\eqref{eq:in_H_1/2}, one can also define the quadratic form on $H^1(\R^3,\C^2)$ and then close it, without changing the result.

\subsubsection*{Step 2. $\cV_\mu$ coincides with the maximal space}
Next we prove the formula~\eqref{eq:cV_mu_maximal} which states that $\cV_\mu$ coincides with the maximal space on which one can give a meaning to the associated norm~\eqref{eq:norm_V}. We start by proving the following lemma.

\begin{lemma}[Regularity of $(1+V_\mu)^{\alpha}$]\label{lem:V_mu}
Let $\mu$ be a non-negative Radon measure over $\R^3$ and $V_\mu:=\mu\ast|\cdot|^{-1}$. Then $\nabla (1+V_\mu)^{\alpha}\in L^2(\R^3)$ for all $\alpha<1/2$ and we have 
\begin{equation}
 \int_{\R^3}\left|\nabla (1+V_\mu)^{\alpha}\right|^2\leq C_\alpha \,\mu(\R^3)
\end{equation}
for a constant $C_\alpha$ depending only on $\alpha$. When $\alpha=0$ we have the same estimate with $(1+V_\mu)^{\alpha}$ replaced by $\log(1+V_\mu)$. 
\end{lemma}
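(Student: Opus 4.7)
The plan is to exploit the Laplacian identity $-\Delta V_\mu = 4\pi\mu$ via a Green's identity argument applied to $(1+V_\mu)^\beta$ with $\beta = 2\alpha$. First I would assume that $\mu$ is smooth and compactly supported; the general case will follow by mollification. The chain rule gives
$$\Delta(1+V_\mu)^\beta = \beta(\beta-1)(1+V_\mu)^{\beta-2}|\nabla V_\mu|^2 - 4\pi\beta(1+V_\mu)^{\beta-1}\mu,$$
and integration over a ball $B_R$ followed by the divergence theorem reduces the left-hand side to a boundary integral $\int_{\partial B_R}\beta(1+V_\mu)^{\beta-1}\partial_n V_\mu\,d\sigma$. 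Since $\mu$ is compactly supported, $V_\mu(x)=\mu(\R^3)/|x|+O(|x|^{-2})$ and $\partial_n V_\mu(x)=-\mu(\R^3)/|x|^2+O(|x|^{-3})$ at infinity, so this boundary integral converges to $-4\pi\beta\mu(\R^3)$ as $R\to\infty$. Rearranging yields the key identity
$$(1-\beta)\int_{\R^3}(1+V_\mu)^{\beta-2}|\nabla V_\mu|^2\,dx = 4\pi\int_{\R^3}\bigl[1-(1+V_\mu)^{\beta-1}\bigr]d\mu.$$

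For $\beta<1$ with $\beta\neq 0$, the right-hand side is bounded by $4\pi\mu(\R^3)$ because $(1+V_\mu)^{\beta-1}\in(0,1]$. Using $|\nabla(1+V_\mu)^\alpha|^2 = \alpha^2(1+V_\mu)^{2\alpha-2}|\nabla V_\mu|^2$, one obtains
$$\int_{\R^3}|\nabla(1+V_\mu)^\alpha|^2\,dx \leq \frac{4\pi\alpha^2}{1-2\alpha}\,\mu(\R^3),$$
which provides the desired $C_\alpha=4\pi\alpha^2/(1-2\alpha)$ for $\alpha<1/2$, $\alpha\neq 0$. The case $\alpha=0$ is handled analogously with $f=\log(1+V_\mu)$: the identity $\Delta f = -(1+V_\mu)^{-2}|\nabla V_\mu|^2-4\pi(1+V_\mu)^{-1}\mu$ and the same boundary computation give $\int|\nabla f|^2\,dx=4\pi\int[1-(1+V_\mu)^{-1}]\,d\mu\leq 4\pi\mu(\R^3)$.

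To remove the smoothness/compact-support assumption on $\mu$, I would approximate by $\mu_n:=(\mu\,\1_{B_n})\ast\rho_{1/n}$ with $\rho_{1/n}$ a standard smooth mollifier. Each $\mu_n$ is smooth and compactly supported with $\mu_n(\R^3)\leq\mu(\R^3)$, and $V_{\mu_n}\to V_\mu$ almost everywhere (by monotone convergence for the truncation and Lebesgue differentiation for the mollification). Applied to $\mu_n$, the estimate above shows that $\nabla(1+V_{\mu_n})^\alpha$ is uniformly bounded in $L^2(\R^3)$. Extracting a weakly convergent subsequence and identifying its limit using the a.e.\ pointwise convergence of $(1+V_{\mu_n})^\alpha$ to $(1+V_\mu)^\alpha$ (which forces agreement of distributional gradients), one concludes $\nabla(1+V_\mu)^\alpha\in L^2(\R^3)$ with the same constant by lower semicontinuity of the $L^2$-norm under weak convergence.

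The principal obstacles are the justification of the $R\to\infty$ boundary-integral limit and the control of the approximation step; both are elementary once one works with the compactly supported smooth $\mu_n$, which sidesteps any subtlety arising from point masses of $\mu$ or from $\nabla V_\mu$ failing to belong to $L^2$ globally.
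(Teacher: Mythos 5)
Your proof is correct, and it takes a genuinely different route from the one in the paper. The paper decomposes $\R^3$ into the dyadic level sets $\Omega_i=\{2^{i-1}\leq V_\mu<2^i\}$, tests $-\Delta V_\mu=4\pi\mu$ against the doubly truncated functions $v_i=\max(2^{i-1},\min(V_\mu,2^i))$ to get $\int_{\Omega_i}|\nabla V_\mu|^2\lesssim 2^i\mu(\R^3)$, and then sums a geometric series whose convergence is exactly the condition $\alpha<1/2$. You instead observe that, for smooth compactly supported $\mu$, the quantity of interest admits a \emph{closed-form identity}
\[
(1-2\alpha)\int_{\R^3}(1+V_\mu)^{2\alpha-2}|\nabla V_\mu|^2\,dx=4\pi\int_{\R^3}\bigl[1-(1+V_\mu)^{2\alpha-1}\bigr]\,d\mu,
\]
obtained by computing $\Delta(1+V_\mu)^{2\alpha}$ and integrating over $\R^3$, with the $R\to\infty$ boundary term contributing $-8\pi\alpha\,\mu(\R^3)$ thanks to the asymptotics $V_\mu\sim\mu(\R^3)/|x|$. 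This is cleaner and gives an explicit constant $C_\alpha=4\pi\alpha^2/(1-2\alpha)$ with the correct blow-up rate as $\alpha\to1/2^-$; it also treats $\alpha<0$ and $\alpha=0$ uniformly via the same boundary computation. The condition $\alpha<1/2$ enters solely as the sign requirement $1-2\alpha>0$. One advantage of your route is that it makes the nonvanishing boundary contribution explicit, whereas the paper's identity $\int_{\Omega_i}|\nabla V_\mu|^2=4\pi\int v_i\,d\mu$ implicitly drops a boundary term coming from $v_i\to 2^{i-1}\neq 0$ at infinity (harmlessly, since the omitted term has the favorable sign). Two spots in your write-up could use a touch more care if it were to replace the paper's proof: (a) the diagonal approximation $\mu_n=(\mu\1_{B_n})\ast\rho_{1/n}$ does give $V_{\mu_n}\to V_\mu$ a.e., but it is cleanest to argue in two stages (first mollification, then truncation $N\to\infty$), and (b) passing from a.e.\ convergence of $(1+V_{\mu_n})^\alpha$ to convergence in $\cD'$, needed to identify the weak $L^2$ limit of the gradients, requires a uniform-integrability observation (e.g.\ $(1+V_{\mu_n})^\alpha$ is bounded in $L^{1/\alpha}(K)$ since $(1+V_{\mu_n})^{q\alpha}\leq 1+V_{\mu_n}$ for $q\alpha\leq1$, and $V_{\mu_n}$ is bounded in $L^1(K)$). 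Both are routine and do not affect the soundness of the argument.
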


\begin{proof}
We write the proof for a non-negative $\mu\in C^\ii_c(\R^3)$. The general result follows from an approximation argument. Let $\Omega_0:=\{V_\mu<1\}$ and $\Omega_i:=\{2^{i-1}\leq V_\mu< 2^i\}$. Then we have (with $(1+V_\mu)^{\alpha}$ replaced by $\log(1+V_\mu)$ when $\alpha=0$)
\begin{align*}
\int_{\R^3}\left|\nabla (1+V_\mu)^{\alpha}\right|^2&=\alpha^2\int_{\R^3}\frac{|\nabla V_\mu|^2}{(1+V_\mu)^{2-2\alpha}}\\
&=\alpha^2\sum_{i=0}^\ii\int_{\Omega_i}\frac{|\nabla V_\mu|^2}{(1+V_\mu)^{2-2\alpha}}\\
&\leq\alpha^2\int_{\Omega_0}|\nabla V_\mu|^2+\alpha^2\sum_{i=1}^\ii\frac1{(1+2^{i-1})^{2-2\alpha}}\int_{\Omega_i}|\nabla V_\mu|^2.
% &=4\pi\alpha^2\int_{\Omega_0}V_\mu\,{\rm d}\mu+4\pi\alpha^2\sum_{i=1}^\ii\frac1{(1+2^{i-1})^{2-2\alpha}}\int_{\Omega_i}V_\mu\,{\rm d}\mu.
\end{align*}
Since $-\Delta V_\mu=4\pi\mu$ we have for all $i\geq0$
$$\int_{\Omega_i}|\nabla V_\mu|^2=\int_{\R^3}\nabla V_\mu\cdot\nabla v_i=4\pi\int_{\R^3}v_i\;{\rm d}\mu\leq 4\pi 2^i\;\mu(\R^3)$$
where 
$$v_i:=\begin{cases}
\1(V_\mu\geq 1)+V_\mu\1(V_\mu< 1)&\text{for $i=0$,}\\
2^{i-1}\1(V_\mu\leq 2^{i-1})+2^{i}\1(V_\mu\geq 2^{i})+V_\mu\1(2^{i-1}< V_\mu< 2^i)&\text{for $i\geq1$.}
      \end{cases}
$$
We obtain
$$\int_{\R^3}\left|\nabla (1+V_\mu)^{\alpha}\right|^2\leq 4\pi\alpha^2\left(1+\sum_{i=1}^\ii\frac{2^i}{(1+2^{i-1})^{2-2\alpha}}\right)\;\mu(\R^3),$$
where the series is finite since $\alpha<1/2$.
\end{proof}

The lemma says that for $\phi\in L^2(\R^3)$, $(1+V)^{-1/2}\sigma\cdot\nabla\phi$ makes sense as a distribution. It is then equivalent to require the existence of $g\in L^2$ such that $\sigma\cdot\nabla\phi=(1+V_\mu)^{1/2}g$ or to ask that the distribution $(1+V_\mu)^{-1/2}\sigma\cdot\nabla\phi$ belongs to $L^2$. In the following we freely use any of the two formulations. 

Next we turn to the proof that any function $\phi$ such that $g:=(1+V_\mu)^{-1/2}\sigma\cdot\nabla\phi\in L^2(\R^3,\C^2)$ can be approximated by a sequence $\phi_n$ in $C^\ii_c(\R^3,\C^2)$ for the norm $\|\cdot\|_{\cV_\mu}$, that is, such that $\phi_n\to\phi$ in $L^2(\R^3,\C^2)$ and $(1+V_\mu)^{-1/2}\sigma\cdot\nabla \phi_n\to g$ in $L^2(\R^3,\C^2)$. 

First we truncate the sequence in space. We define $\phi_n(x):=\phi(x)\chi(x/n)$ where $\chi\in C^\ii_c$ is such that $\chi(0)=1$. We have of course $\phi_n\to\phi$ in $L^2(\R^3,\C^2)$. In addition, we have in the sense of distributions, 
\begin{align*}
\sigma\cdot\nabla \phi_n&=\chi(\cdot/n)\sigma\cdot\nabla \phi+\phi\frac{(\sigma\cdot\nabla\chi)(\cdot/n)}{n}\\
&=(1+V_\mu)^{\frac12}\left(\chi(\cdot/n)g+\phi\frac{(\sigma\cdot\nabla\chi)(\cdot/n)}{n(1+V_\mu)^{\frac12}}\right) 
\end{align*}
where the function in parenthesis has a compact support and converges to $g$ in $L^2(\R^3)$. This proves that functions of compact support are dense. In the following we assume, without loss of generality, that $\phi$ and $g$ both have a compact support. 

Next we approximate $\phi$ by a sequence in $H^1$ by arguing as in~\cite{EstLewSer-19}. Let $u\in \dot{H}^1(\R^3)$ such that $\phi=-i\sigma\cdot\nabla u$. Then we have in the sense of distributions
$$-i\sigma\cdot\nabla \phi=-\Delta u=(1+V_\mu)^{\frac12}g.$$
We have
$$u=\frac1{4\pi} \Big((1+V_\mu)^{\frac12}g\Big)\ast\frac1{|x|},\qquad  \phi=i\frac1{4\pi} \Big((1+V_\mu)^{\frac12}g\Big)\ast\sigma\cdot \frac{x}{|x|^3}.$$
Next we define
$$u_\eps=\frac1{4\pi} \Big((1+V_\mu)^{\frac12}\1(V_\mu\leq\eps^{-1})g\Big)\ast\frac1{|x|},\qquad  \phi_\eps=-i\sigma\cdot\nabla u_\eps$$
which satisfies
$$-i\sigma\cdot\nabla \phi_\eps=-\Delta u_\eps=(1+V_\mu)^{\frac12}g_\eps=i\frac1{4\pi} \Big((1+V_\mu)^{\frac12}g_\eps\Big)\ast\sigma\cdot \frac{x}{|x|^3}$$
where $g_\eps=g\1(V_\mu\leq\eps^{-1})$.
Since $(1+V_\mu)^{\frac12}\1(V_\mu\leq\eps^{-1})g\in L^2(\R^3)$ and $g$ has compact support, we have $(1+V_\mu)^{\frac12}\1(V_\mu\leq\eps^{-1})g\in L^{6/5}(\R^3)$. From the Hardy-Littlewood-Sobolev inequality, this shows that $\phi_\eps\in L^2(\R^3)$. From the definition we also have $\sigma\cdot\nabla \phi_\eps\in L^2(\R^3)$, hence $\nabla\phi_\eps\in L^2(\R^3)$ and $\phi_\eps\in H^1(\R^3)$. From the dominated convergence theorem, we have $g_\eps\to g$ in $L^2(\R^3)$ and we now have to show that $\phi_\eps\to\phi$ in $L^2$ as well. We have, again by the Hardy-Littlewood-Sobolev inequality,
\begin{align*}
\int_{\R^3}|\phi-\phi_\eps|^2&\leq C\norm{(1+V_\mu)^{\frac12}\1(V_\mu\geq\eps^{-1})g}_{L^{6/5}(\R^3)}^2\\
&\leq  C\norm{g}_{L^2(B)}^2\left(\int_B(1+V_\mu)^{\frac32}\1(V_\mu\geq\eps^{-1})\right)^{\frac23}
\end{align*}
where ${\rm supp}(g)\subset B$. The right side tends to zero when $\eps\to0$ and this shows that $\phi_\eps\to\phi$ for the norm $\|\cdot\|_{\cV_\mu}$. The density of $C^\ii_c$ is then proved using the fact that $\|\cdot\|_{\cV_\mu}$ is dominated by the $H^1$ norm and this concludes the proof of~\eqref{eq:cV_mu_maximal}, hence of Theorem~\ref{thm:def_cV_mu}.\qed

%%%%%%%%%%%%%%%%%%%%%%%%%%%%%%%%%%%%%%%%%%%%%%%%%%%%%%%%%%%%%%%%
\section{Proof of Theorems~\ref{thm:self-adjoint} and~\ref{thm:min-max}}\label{sec:proofs_self_adjoint}
%%%%%%%%%%%%%%%%%%%%%%%%%%%%%%%%%%%%%%%%%%%%%%%%%%%%%%%%%%%%%%%%

We consider the quadratic form
\begin{equation}
 q_\lambda(\phi):=\int_{\R^3}\frac{|\sigma\cdot\nabla\phi|^2}{1+\lambda+V_\mu}\,dx+\int_{\R^3}(1-\lambda-V_\mu)|\phi|^2
 \label{eq:def_q_lambda_bis}
\end{equation}
defined (first) on $H^1(\R^3)$ and show that it is coercive for the norm of $\cV_\mu$, after adding $C\norm{\phi}_{L^2(\R^3)}^2$ for an appropriate constant $C$. For this the estimate~\eqref{eq:in_H_1/2} on the first term is not enough because we also have to control the negative Coulomb part involving $V_\mu$. Our proof will be based on Theorem~\ref{thm:distinguished}, where we have shown that 
\begin{equation}
\norm{\sqrt{V_\mu}\frac1{D_0+is}\sqrt{V_\mu}}<1
\label{eq:estimate_i_s}
\end{equation}
for $s$ large enough.
We will explain here how this can be used to get some information on the quadratic form $q_\lambda$ in~\eqref{eq:def_q_lambda}. More precisely, we will use a similar argument as in the previous section and show that 
\begin{equation}
\max\Spec\left(\sqrt{V_\mu}\frac1{D_0+\frac{C-\eps|p|}2 (\beta+1)}\sqrt{V_\mu}\right)<1
\label{eq:estimate_C_beta_eps}
\end{equation}
for $C$ large enough and $\eps$ small enough. Recall that $C(\beta+1)/2$ has the effect of only translating the upper component by $C$, that is, to push the upper part of the essential spectrum. Like in the previous section, the estimate~\eqref{eq:estimate_C_beta_eps} implies that 
\begin{equation}
q_0(\phi)\geq\eps\norm{|p|^{\frac12}\phi}_{L^2(\R^3)}^2-C\norm{\phi}_{L^2(\R^3)}^2
\label{eq:bound_q_0}
\end{equation}
for all $\phi\in H^1(\R^3)$. This is the relation between the method of Nenciu \emph{et al}~\cite{Nenciu-76} based on estimates for the operator $K_\lambda$, and the method initiated by Esteban-Loss~\cite{EstLos-07,EstLos-08} based on the quadratic form $q_\lambda$. To our knowledge, this is the first time that such a link is established.

\subsubsection*{Step 1. Proof of~\eqref{eq:estimate_C_beta_eps}}

The proof of~\eqref{eq:estimate_C_beta_eps} is based on the following lemma.

\begin{lemma}[Relating resolvents]\label{lem:useful_resolvents}
For every $0\leq\eps\leq 1$ and $C\geq0$, we have the operator bound
\begin{multline}
 \frac1{D_0+\frac{C-\eps|p|}2 (\beta+1)}\\\leq \frac12 \left(\frac1{\alpha\cdot p+\beta+i\sqrt{C}}+\frac1{\alpha\cdot p+\beta-i\sqrt{C}}\right)+\frac{8\eps(1+C)}{|p|}.
\end{multline}
\end{lemma}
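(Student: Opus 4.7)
All operators involved are Fourier multipliers (functions of $p=-i\nabla$): $D_0=\alpha\cdot p+\beta$ has a matrix-valued symbol, $(\beta+1)/2$ is a constant matrix, $|p|$ is a scalar symbol, and they combine consistently on the Fourier side. My plan is therefore to pass to the Fourier transform and prove the stated bound pointwise in $p\in\R^3$, as an inequality between $4\times 4$ Hermitian matrices. For $0\leq\varepsilon\leq 1$ and $C\geq 0$, one easily checks $p^2+1+C-\varepsilon|p|\geq(p^2+1+C)/2>0$, which ensures invertibility of the left-hand-side symbol and self-adjointness of every multiplier appearing in the inequality.

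First I would compute the two symbols in $2\times 2$ block form. Writing $P_+=(\beta+1)/2=\mathrm{diag}(I_2,0)$ and using $(\sigma\cdot p)^2=p^2$, the block-inversion formula yields the symbol of the left-hand-side operator as
\begin{equation*}
M_\varepsilon(p)=\frac{1}{p^2+1+C-\varepsilon|p|}\begin{pmatrix} I_2 & \sigma\cdot p \\ \sigma\cdot p & -(1+C-\varepsilon|p|)I_2\end{pmatrix}.
\end{equation*}
For the first term on the right, the elementary identity $\tfrac12\bigl(\tfrac{1}{T+i\sqrt{C}}+\tfrac{1}{T-i\sqrt{C}}\bigr)=T/(T^2+C)$ applied to $T=D_0$, together with $D_0^2=p^2+1$, gives
\begin{equation*}
N(p)=\frac{\alpha\cdot p+\beta}{p^2+1+C}=\frac{1}{p^2+1+C}\begin{pmatrix} I_2 & \sigma\cdot p \\ \sigma\cdot p & -I_2\end{pmatrix}.
\end{equation*}

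Next, I would subtract and expose a rank-$2$ structure. After placing $M_\varepsilon(p)-N(p)$ over the common denominator $(p^2+1+C)(p^2+1+C-\varepsilon|p|)$, a short computation produces the decomposition
\begin{equation*}
M_\varepsilon(p)-N(p)=\frac{\varepsilon|p|}{(p^2+1+C-\varepsilon|p|)(p^2+1+C)}\begin{pmatrix} I_2 \\ \sigma\cdot p\end{pmatrix}\begin{pmatrix} I_2 & \sigma\cdot p\end{pmatrix}-\frac{C}{p^2+1+C}(I_4-P_+).
\end{equation*}
The second term is nonpositive and will be dropped. The first is a positive operator of the form $AA^*$ where $A$ is the $4\times 2$ block with $I_2$ on top and $\sigma\cdot p$ on the bottom; since its nonzero spectrum coincides with that of $A^*A=I_2+(\sigma\cdot p)^2=(1+p^2)I_2$, it is bounded above by $(1+p^2)I_4$.

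Finally, the elementary bound $\varepsilon|p|\leq\tfrac12(p^2+\varepsilon^2)\leq\tfrac12(p^2+1+C)$ yields $p^2+1+C-\varepsilon|p|\geq(p^2+1+C)/2$, so the positive contribution is at most $2\varepsilon|p|(1+p^2)/(p^2+1+C)^2$. The polynomial estimate $p^2(1+p^2)\leq 4(1+p^2)^2\leq 4(p^2+1+C)^2(1+C)$, valid for all $p\in\R^3$ and all $C\geq 0$, then gives
\begin{equation*}
\frac{2\varepsilon|p|(1+p^2)}{(p^2+1+C)^2}\leq\frac{8\varepsilon(1+C)}{|p|},
\end{equation*}
which closes the argument. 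The only genuinely nontrivial step is the algebraic rearrangement revealing the rank-$2$ factorization of $M_\varepsilon(p)-N(p)$; once that is in hand, every remaining ingredient is a standard block-matrix identity or an elementary polynomial bound.
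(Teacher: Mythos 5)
Your proof is correct, and it takes a route that is genuinely different from the paper's at one key step: the treatment of the $\eps$-perturbation. The paper handles $\eps=0$ exactly via the resolvent formula (the same identity underlies your expression for $N(p)$ and your nonpositive term $-\frac{C}{p^2+1+C}(I_4-P_+)$), and then for $\eps>0$ it invokes the second resolvent identity $A^{-1}-B^{-1}=A^{-1}(B-A)B^{-1}$ and separately bounds $\bigl|D_0+\tfrac{C-\eps|p|}{2}(\beta+1)\bigr|^{-1}$ and $\bigl|D_0+\tfrac{C}{2}(\beta+1)\bigr|^{-1}$ by constant multiples of $\sqrt{1+C}/|p|$, multiplying these out to obtain the $O(\eps(1+C)/|p|)$ correction. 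You instead compute the symbol $M_\eps(p)$ for general $\eps$ at once, subtract $N(p)$, and observe that the difference splits into the manifestly nonpositive $\eps$-independent term plus a positive rank-$2$ term of the explicit form $\frac{\eps|p|}{D_1 D_2}AA^*$ with $A=\binom{I_2}{\sigma\cdot p}$, which you control via $AA^*\leq(1+p^2)I_4$ and elementary polynomial inequalities. I verified the rank-$2$ decomposition (the $(2,2)$-block identity $\frac{\eps|p|p^2}{D_1D_2}-\frac{C}{D_2}=\frac{\eps|p|(p^2+C)-C(p^2+1+C)}{D_1D_2}$ holds because $D_2-D_1=\eps|p|$) and the final bound $2\eps|p|(1+p^2)/(p^2+1+C)^2\leq 8\eps(1+C)/|p|$. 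Your approach avoids perturbation theory entirely and replaces the two operator-norm bounds on resolvent moduli with a single transparent matrix factorization; both arguments are Fourier-multiplier computations and yield the same constant, but yours is somewhat more self-contained and, by revealing the exact rank-$2$ structure of the error, would also make it clearer how to sharpen the constant if desired.
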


\begin{proof}
We start with the case $\eps=0$. Note that we have
$$D_0+\frac{C}2(\beta+1)=\alpha\cdot p+\left(1+\frac{C}2\right)\beta+\frac{C}2$$
whose spectrum is given by the values of the functions
$$\pm\sqrt{|p|^2+\left(1+\frac{C}2\right)^2}+\frac{C}{2}.$$
The upper function is clearly bounded from below by $1+C$ whereas the lower function is bounded above by $-1$. The gap is $(-1,1+C)$. For large $p$, the new operator behaves like $D_0$. This allows us to compute the resolvent, which we express in the form
\begin{align}
\frac{1}{D_0+\frac{C}2(\beta+1)} &=\frac{\alpha\cdot p+\left(1+\frac{C}2\right)\beta-\frac{C}2}{|p|^2+\left(1+\frac{C}2\right)^2-\frac{C^2}{4}}\nn\\
&=\frac{\alpha\cdot p}{|p|^2+1+C}+\frac{\beta}{|p|^2+1+C}+\frac{C}2\frac{\beta-1}{|p|^2+1+C}.\label{eq:useful_computation}
% &\leq \frac{\alpha\cdot p}{|p|^2+1+C}+\frac{1}{2|p|\sqrt{1+C}}
\end{align}
Inserting 
$$\frac{\alpha\cdot p}{|p|^2+C+1}=\frac12 \left(\frac1{\alpha\cdot p+\beta+i\sqrt{C}}+\frac1{\alpha\cdot p+\beta-i\sqrt{C}}\right)-\frac{\beta}{|p|^2+1+C}$$
we obtain the relation
\begin{multline*}
\frac{1}{D_0+\frac{C}2(\beta+1)} \\=\frac12 \left(\frac1{\alpha\cdot p+\beta+i\sqrt{C}}+\frac1{\alpha\cdot p+\beta-i\sqrt{C}}\right)
+\frac{C}2\frac{\beta-1}{|p|^2+1+C}.\label{eq:useful_computation2}
\end{multline*}
Since $\beta\leq1$ the last term is non-positive hence we find the simple inequality
\begin{equation}
\frac{1}{D_0+\frac{C}2(\beta+1)} \leq \frac12 \left(\frac1{\alpha\cdot p+\beta+i\sqrt{C}}+\frac1{\alpha\cdot p+\beta-i\sqrt{C}}\right).
\label{eq:useful_computation3}
\end{equation}

Next we consider the case $0<\eps\leq1$ and note first that 
$$D_0+\frac{C-\eps|p|}2(\beta+1)=\alpha\cdot p+\left(1+\frac{C-\eps|p|}2\right)\beta+\frac{C-\eps|p|}2$$
has the spectrum given by the values of the functions
\begin{multline*}
\pm\sqrt{|p|^2+\left(1+\frac{C-\eps|p|}2\right)^2}+\frac{C-\eps|p|}{2}\\=\pm\sqrt{|p|^2+1+C-\eps|p|+\left(\frac{C-\eps|p|}2\right)^2}+\frac{C-\eps|p|}{2}. 
\end{multline*}
Noticing that 
$$|p|^2+1+C-\eps|p|\geq |p|^2+1+C-|p|\geq \frac{|p|^2}{2}+\frac12+C>0.$$
we see that the two eigenvalues do not approach the origin, hence the operator is invertible. We can next estimate the difference by 
\begin{align}
&\left|\frac1{D_0+\frac{C-\eps|p|}2 (\beta+1)}-\frac1{D_0+\frac{C}2 (\beta+1)}\right|\nn\\
&\qquad =\frac{\eps|p|}2\left|\frac1{D_0+\frac{C-\eps|p|}2 (\beta+1)}(\beta+1)\frac1{D_0+\frac{C}2 (\beta+1)}\right|\nn\\
&\qquad \leq \frac{\eps|p|}{\left|D_0+\frac{C-\eps|p|}2 (\beta+1)\right|\;\left|D_0+\frac{C}2 (\beta+1)\right|}.\label{eq:estim_difference_resolvents}
\end{align}
From~\eqref{eq:useful_computation} we have
$$\frac1{\left|D_0+\frac{C}2 (\beta+1)\right|}\leq \frac{|p|+1+C}{|p|^2+1+C}\leq \frac{1+\frac{\sqrt{1+C}}2}{|p|}\leq \frac{3\sqrt{1+C}}{2|p|}.$$
On the other hand, using~\eqref{eq:useful_computation} with $C$ replaced by $C-\eps|p|$ we obtain
\begin{align*}
\frac1{\left|D_0+\frac{C-\eps|p|}2 (\beta+1)\right|}&\leq\frac{|p|+1+C+\eps|p|}{|p|^2+1+C-\eps|p|}\\
&\leq\frac{2|p|+1+C}{\frac{|p|^2}{2}+\frac12+C}\leq\frac{4+\sqrt{1+C}}{|p|}\leq\frac{5\sqrt{1+C}}{|p|}.
% &\leq\frac{|p|^2+2+C}{\frac{|p|^2}{2}+\frac34+C} \leq 1+\frac{|p|^2+\frac14}{\frac{|p|^2}{2}+\frac34+C}\leq 1+2\frac{|p|^2+\frac14}{|p|^2+\frac32}  
\end{align*}
Inserting this bound in~\eqref{eq:estim_difference_resolvents} gives the claimed inequality. The constants are not at all optimal and they are only displayed for concreteness. 
\end{proof}

Using Lemma~\ref{lem:useful_resolvents} and Kato's inequality, we obtain 
\begin{multline*}
\max\;\Spec\left(\sqrt{V_\mu}\frac{1}{D_0+\frac{C-\eps|p|}2(\beta+1)}\sqrt{V_\mu}\right) \leq 4\pi\eps(1+C)\mu(\R^3)\\+\frac12 \left(\norm{\sqrt{V_\mu}\frac1{\alpha\cdot p+\beta+i\sqrt{C}}\sqrt{V_\mu}}+\norm{\sqrt{V_\mu}\frac1{\alpha\cdot p+\beta-i\sqrt{C}}\sqrt{V_\mu}}\right).
\end{multline*}
We have shown in the proof of Theorem~\ref{thm:distinguished} that the two operator norms are less than 1 for $C$ large enough. Taking $\eps$ small enough then concludes our proof of~\eqref{eq:estimate_C_beta_eps}. 

\subsubsection*{Step 2. Proof of~\eqref{eq:bound_q_0} and equivalence of quadratic forms}
Let us truncate $V_\mu$ into $W_n=V_\mu\1(V_\mu\leq n)$ and notice that, by Step 1,
\begin{multline}
\max\Spec\left(\sqrt{W_n}\frac1{D_0+\frac{C-\eps|p|}2(\beta+1)}\sqrt{W_n}\right)\\
\leq \max\Spec\left(\sqrt{V_\mu}\frac1{D_0+\frac{C-\eps|p|}2(\beta+1)}\sqrt{V_\mu}\right)<1 
\end{multline}
since $W_n\leq V_\mu$ pointwise, for $C$ large enough and $\eps$ small enough. For the bounded potential $W_n$, the min-max formula and the Birman-Schwinger principle are well known. The previous condition implies that 
$$D_0-t W_n+\frac{C-\eps|p|}2(\beta+1)$$
has no eigenvalue in $(-1,0)$ for every $t\in[0,1]$. 
From the min-max principle and a continuation argument in $t$ from~\cite{DolEstSer-00}, this is equivalent to saying that 
$$q_{0,W_n}(\phi)\geq-C\norm{\phi}_{L^2}^2+\eps\norm{|p|^{\frac12}\phi}_{L^2}^2$$
for all $\phi\in H^1(\R^3,\C^2)$, where of course $q_{0,W_n}$ denotes the quadratic form with $V_\mu$ replaced by $W_n$. Passing to the limit $n\to\ii$ we obtain ~\eqref{eq:bound_q_0}. This is not yet enough to show that $q_0$ is equivalent to the square of the norm of $\cV_\mu$. Now using Kato's inequality, for $\eta<1$ we write 
\begin{align*}
&q_0(\phi)+C\norm{\phi}^2_{L^2}\\
&\qquad \geq \eta\int_{\R^3}\frac{|\sigma\cdot\nabla\phi|^2}{1+\mu\ast|x|^{-1}}\,dx -\eta\int_{\R^3}V_\mu|\phi|^2 +(1-\eta)\eps\norm{|p|^{\frac12}\phi}^2\\
&\qquad \geq \eta\int_{\R^3}\frac{|\sigma\cdot\nabla\phi|^2}{1+\mu\ast|x|^{-1}}\,dx  +\left((1-\eta)\eps-\frac\pi2\eta\mu(\R^3)\right)\norm{|p|^{\frac12}\phi}^2.
\end{align*}
After taking 
$$\eta<\frac{\eps}{\eps+\frac\pi2\mu(\R^3)}$$
we see that the quadratic form $q_0+C\|\cdot\|_{L^2(\R^3)}^2$ is equivalent to the square of the norm of $\cV_\mu$. This quadratic form is thus closable on $H^1(\R^3,\C^2)$ and its closure is also equivalent to the square of the norm of $\cV_\mu$.

\subsubsection*{Step 3. Domain and min-max principle}

Now we can apply the results of~\cite{EstLos-07,EstLos-08,SchSolTok-20} to the operator $D_0+C(\beta+1)/2-V_\mu$ and we obtain a unique self-adjoint extension, distinguished from the property that its domain is included in $\cV_\mu\times L^2(\R^3,\C^2)$. This relies on the fact that the multiplication operator $-1+V_\mu$ is essentially self-adjoint on $C^\ii_c(\R^3,\C^2)$~\cite{SchSolTok-20}. The domain is as described in the statement of the theorem. Of course, the same holds for $D_0-V_\mu$ since $C(\beta+1)/2$ is bounded.

Next we prove that the domain is included in $\cV_\mu\times\cV_\mu$. Let $\Psi=(\phi,\chi)\in\cV_\mu\times L^2(\R^3,\C^2)$ be in the domain. Then we have
$$\begin{cases}
(1-V_\mu)\phi+\sigma\cdot p\chi=f\in L^2(\R^3,\C^2),\\
-(1+V_\mu)\chi+\sigma\cdot p\phi=g\in L^2(\R^3,\C^2),
  \end{cases}$$
where the terms on the left side are interpreted as distributions. Since $\phi\in \cV_\mu\subset H^{1/2}(\R^3,\C^2)$, we have $(1+V_\mu)^{1/2}\phi\in L^2(\R^3,\C^2)$, by Kato's inequality. But the first equation can then be written in the form
$$\sigma\cdot p\chi=(V_\mu+1)^{\frac12}\left((V_\mu+1)^{\frac12}\phi+\frac{f-2\phi}{(V_\mu+1)^{\frac12}}\right)$$
where the function in parenthesis belongs to $L^2(\R^3,\C^2)$. By the characterization~\eqref{eq:cV_mu_maximal} of $\cV_\mu$, this gives immediately that $\chi\in \cV_\mu$. Therefore we have shown that 
$$\cD(D_0-V_\mu)\subset \cV_\mu\times\cV_\mu\subset H^{1/2}(\R^3,\C^4).$$
By uniqueness in $H^{1/2}$ we conclude that this extension must be the same as the one from Theorem~\ref{thm:distinguished}. The Birman-Schwinger principle was shown in~\cite{Nenciu-76,Klaus-80b}. This concludes the proof of Theorem~\ref{thm:self-adjoint}.

The validity of the min-max formulas was shown for one-center Dirac operators in~\cite{DolEstSer-00} in the domain of the distinguished extension and then in $H^{1/2}(\R^3)$ in~\cite{MorMul-15,Muller-16}. This was extended to all spaces between $C^\ii_c(\R^3,\C^4)$ and $H^{1/2}(\R^3,\C^4)$ in~\cite{EstLewSer-19}. Here we can follow the same approach. The min-max is valid in $H^{1/2}(\R^3,\C^4)$ and the density of $C^\ii_c(\R^3,\C^4)$ in $\cV_\mu$ allows to conclude that the formula must hold in all spaces in between, following the argument of~\cite{EstLewSer-19}. In particular, the numbers $\lambda_F^{(k)}$ are independent of the chosen space $F$. One notable difference is that in those works it is often assumed that $\lambda^{(1)}>-1$ but it was explained in~\cite{DolEstSer-06} how to handle the case where we only have $\lambda^{(k_0)}>-1$ for some $k_0\geq1$.

\subsubsection*{Step 4. Proof of $\lambda^{(k)}\nearrow 1$}
Let us first prove that for any positive integer $k$, $\lambda^{(k)}<1$ if $\mu\neq 0$. For every $k$ we choose a $k$-dimensional subspace of radial functions in $C^\ii_c(\R^3,\C^2)$, denoted by $W_k$. 
Let $U_R(f)=R^{-3/2}f(\cdot/R)$ be the unitary operator which dilates the function by a factor $R$. Introduce $W_{k,R}:=U_RW_k$. Then for every normalized $\phi_R=U_R\phi\in W_{k,R}$, we have
$$q_\lambda(\phi_R)=\frac1{R^2}\int_{\R^3}\frac{|\sigma\cdot\nabla \phi(x)|^2}{1+\lambda+V_\mu(Rx)}\,dx+\int_{\R^3}(1-\lambda-V_\mu(Rx))|\phi(x)|^2\,dx.$$
Decomposing $\mu=\mu\1_{B_\eta}+\mu\1_{B_\eta^c}$ with a large but fixed $\eta$, we have
$$\left|\int_{\R^3}V_\mu(Rx)|\phi(x)|^2\,dx-\int_{\R^3}V_{\mu\1_{B_\eta}}(Rx)|\phi(x)|^2\,dx\right|\leq \frac{\mu(\R^3\setminus B_\eta)\pi}{2R}\norm{\phi}_{H^{\frac12}(\R^3)}^2.$$
On the other hand, after passing to Fourier variables and using $|\widehat{\mu\1_{B_\eta}}(k)-\widehat{\mu\1_{B_\eta}}(0)|\leq C\eta|k|$, we find
$$\left|\int_{\R^3}V_{\mu\1_{B_\eta}}(Rx)|\phi(x)|^2\,dx-\frac{\mu(B_\eta)}{R}\int_{\R^3}\frac{|\phi(x)|^2}{|x|}\,dx\right|\leq\frac{C\eta}{R^2}\norm{\phi}_{H^1(\R^3)}^2.$$
In our finite-dimensional space, all the norms are equivalent, hence we obtain
$$\int_{\R^3}V_\mu(Rx)|\phi(x)|^2\,dx\geq \left(c\,\frac{\mu(B_\eta)-C\mu(\R^3\setminus B_\eta)}{R}-\frac{C\eta}{R^2}\right)\norm{\phi}_{L^2(\R^3)}^2$$
for some $c>0$ depending on $W_k$. Choosing $\eta$ large enough and $\lambda=1-\eps/R$ with $\eps>0$ small enough, we deduce that 
$q_{1-\eps/R}(\phi_R)< 0$ on $W_{k,R}$ for $R$ large enough. The min-max formula~\eqref{eq:min-max} can be reformulated in terms of the quadratic form $q_\lambda$ as in~\cite{DolEstSer-00, SchSolTok-20}
\begin{align}
 \lambda^{(k)}&=\inf\left\{\lambda\ :\ \exists W\subset \cV_\mu,\ \dim(W)=k\ :\ q_\lambda(\phi)\leq0,\ \forall\phi\in W\right\}.
%  &=\sup\left\{\lambda\ :\ \forall W\subset \cV_\mu,\ {\rm codim}(W)=k\ :\ q_\lambda(\phi)\geq0,\ \forall\phi\in W\right\}.
 \label{eq:lambda_k_q_lambda}
\end{align}
Using the characterization on the first line, this proves that $\lambda^{(k)}\leq 1-\eps/R$, as we wanted. 

Next we prove that $\lambda^{(k)}\to1$ when $k\to\ii$. Note that $k\mapsto\lambda^{(k)}$ is non-decreasing and $<1$ by the previous step. In addition, recall that 
$$\Spec_{\rm ess}(D_0-V_\mu)=(-\ii,-1]\cup[1\cup\ii)$$
by Theorem~\ref{thm:distinguished}. From this we conclude that if we have $\lambda^{(k_0)}>-1$ for some $k_0$, then $\lambda^{(k)}$ is an eigenvalue of $D_0-V_\mu$ and it can only converge to $1$.
Let us argue by contradiction and assume that $\lambda^{(k)}=-1$ for all $k\geq1$. By the characterization in~\eqref{eq:lambda_k_q_lambda} we conclude that there exists a sequence of spaces $W_k\subset \cV_\mu$ of dimension $\dim(W_k)=k$ and $\eps_k\to0^+$ such that $q_{-1+\eps_k}$ is negative on $W_k$. By monotonicity with respect to $\lambda$, we conclude that $q_0$ is also negative on $W_k$. After extraction this provides a sequence $\phi_n\in\cV_\mu$ such that $\norm{\phi_n}_{L^2}=1$, $\phi_n\wto0$ weakly and $q_{0}(\phi_n)<0$. By~\eqref{eq:bound_q_0} we know that 
$$q_0(\phi_n)\geq \eps\norm{|p|^{\frac12}\phi_n}^2_{L^2(\R^3)}-C_\lambda\norm{\phi_n}_{L^2(\R^3)}^2$$
and this proves that the sequence $\{\phi_n\}$ is bounded in $H^{1/2}(\R^3)$. Using Kato's inequality for the negative term involving $V_\mu$ in $q_0$ and again the fact that $q_0(\phi_n)<0$, we finally obtain that the sequence $(\phi_n)$ is bounded in $\cV_\mu$. Next we pick a localization function $\chi_R(x)=\chi(x/R)$ where $\chi\in C^\ii_c(\R^3,[0,1])$, $\chi\equiv1$ on $B_1$ and $\chi\equiv0$ on $\R^3\setminus B_2$ and let $\eta_R:=\sqrt{1-\chi_R^2}$. We use the \emph{pointwise} IMS formula for the Pauli operator~\cite{BosDolEst-08} which states that 
\begin{align}
\sum_k|\sigma\cdot\nabla (J_k\phi)|^2&=\sum_k \sum_{i,j=1}^3\pscal{ \partial_i(J_k\phi),\sigma_i\sigma_j \partial_j(J_k\phi)}_{\C^2}\nn\\
&=|\sigma\cdot \nabla\phi|^2+\sum_k \sum_{i,j=1}^3\pscal{ \phi,\sigma_i\sigma_j \phi}_{\C^2}\partial_iJ_k\partial_j J_k\nn\\
&\qquad+2\Re\sum_k \sum_{i,j=1}^3\pscal{ \partial_i\phi,\sigma_i\sigma_j \phi}_{\C^2}J_k\partial_j J_k\nn\\
&=|\sigma\cdot \nabla\phi|^2+|\phi|^2\sum_k |\nabla J_k|^2
\label{eq:IMS}
\end{align}
for any real partition of unity $\sum_k J_k^2=1$. We have used that $\sigma_i\sigma_j+\sigma_j\sigma_i=0$ for $i\neq j$ in the second term of the second equality and that $2\sum_k J_k\partial_j J_k=\partial_j\sum_k J_k^2=0$ for the last term. We obtain
\begin{multline*}
q_0(\phi_n)=\int_{\R^3}\frac{|\sigma\cdot\nabla (\chi_R\phi_n)|^2}{1+V_\mu}-\int_{\R^3}V_\mu\chi_R^2|\phi_n|^2+1\\
+\int_{\R^3}\frac{|\sigma\cdot\nabla (\eta_R\phi_n)|^2}{1+V_\mu} -\int_{\R^3}V_\mu\eta_R^2|\phi_n|^2 -\int_{\R^3}\frac{|\nabla\chi_R|^2+|\nabla \eta_R|^2}{1+V_\mu}|\phi_n|^2.
\end{multline*}
For the first two terms involving $\chi_R$ we use that $q_0$ is bounded from below by~\eqref{eq:bound_q_0}, which yields
$$\int_{\R^3}\frac{|\sigma\cdot\nabla (\chi_R\phi_n)|^2}{1+V_\mu}-\int_{\R^3}V_\mu\chi_R^2|\phi_n|^2\geq -C\int_{\R^3}\chi_R^2|\phi_n|^2.$$
Hence we obtain
\begin{equation}
 q_0(\phi_n)\geq 1-C\int_{\R^3}\chi_R^2|\phi_n|^2 -\int_{\R^3}V_\mu\eta_R^2|\phi_n|^2 -\frac{C}{R^2}.
 \label{eq:estim_lower_bd_q_0_weak}
\end{equation}
We will prove that the negative terms on the right side are all small in the limit, which gives $q_0(\phi_n)\geq0$, a contradiction. We start with the second negative term. We decompose $\mu=\mu\chi_{R/4}^2+\mu\eta_{R/4}^2$ and remark that $V_{\mu\chi_{R/4}^2}\eta_{R}^2\leq C/R$ whereas 
$$\int_{\R^3}V_{\mu\eta_{R/4}^2}\eta_R^2|\phi_n|^2\leq \frac\pi2\mu(\R^3\setminus B_{R/2})\norm{\eta_R\phi_n}_{H^{1/2}}^2\leq C\mu(\R^3\setminus B_{R/2})$$
by Kato's inequality. Hence 
$$\int_{\R^3}V_\mu\eta_R^2|\phi_n|^2\leq C\left(\frac1R+\mu(\R^3\setminus B_{R/2})\right).$$
We may therefore choose $R$ large enough such that 
$$\int_{\R^3}V_\mu\eta_R^2|\phi_n|^2 +\frac{C}{R^2}\leq \frac12$$
in~\eqref{eq:estim_lower_bd_q_0_weak}. But, due to the weak convergence $\phi_n\wto0$ in $H^{1/2}$, we have for this fixed $R$ 
$$\lim_{n\to\ii}\int_{\R^3}\chi_R^2|\phi_n|^2=0$$
and this proves, as we claimed, that $q_0(\phi_n)\geq0$ for $n$ large. 

\subsubsection*{Step 5. Proof that $\lambda^{(1)}\geq0$ under condition~\eqref{eq:condition_Tix}}
We claim that for every finite positive measure $\mu$
\begin{equation}
\norm{K_0}\leq \mu(\R^3)\frac{\frac\pi2+\frac2\pi}{2}\quad\text{where}\quad K_0:=\sqrt{V_\mu}\frac{1}{D_0}\sqrt{V_\mu}. 
\label{eq:Tix_K_0}
\end{equation}
Then, the statement that $\lambda^{(1)}\geq0$ and that there are no eigenvalue in $(-1,0)$ follows from the resolvent formula~\eqref{eq:resolvent} and the Birman-Schwinger principle in Theorem~\ref{thm:self-adjoint}.
To estimate the norm of $K_0$ we introduce the free Dirac spectral projections $P_0^\pm=\1_{\R_\pm}(D_0)$ and write 
$$-\sqrt{V_\mu} \frac{P_0^-}{\sqrt{1-\Delta}}\sqrt{V_\mu}\leq \sqrt{V_\mu}\frac{1}{D_0}\sqrt{V_\mu}\leq\sqrt{V_\mu} \frac{P_0^+}{\sqrt{1-\Delta}}\sqrt{V_\mu}.$$
We obtain 
$$\norm{\sqrt{V_\mu}\frac{1}{D_0}\sqrt{V_\mu}}\leq \max_{\tau\in\{\pm\}}\norm{\sqrt{V_\mu} \frac{P_0^\tau}{\sqrt{1-\Delta}}\sqrt{V_\mu}}.$$
By charge-conjugation the two norms on the right are equal in the maximum. One can bound them by 
\begin{multline*}
\norm{\sqrt{V_\mu} \frac{P_0^+}{\sqrt{1-\Delta}}\sqrt{V_\mu}}=\norm{\frac{P_0^+}{(1-\Delta)^{\frac14}}V_\mu\frac{P_0^+}{(1-\Delta)^{\frac14}}}\\
\leq \mu(\R^3)\norm{\frac{P_0^+}{(p^2+1)^{\frac14}}\frac1{|x|}\frac{P_0^+}{(p^2+1)^{\frac14}}} =\mu(\R^3)\frac{\frac\pi2+\frac2\pi}{2}.
\end{multline*}
The inequality is by convexity in $\mu$ and translation-invariance of the operator $P^+_0(1-\Delta)^{-1/4}$. The last equality is due to Tix~\cite{Tix-98}. 
\qed

%%%%%%%%%%%%%%%%%%%%%%%%%%%%%%%%%%%%%%%%%%%%%%%%%%%%%%%%%%%%%%%%%%
%%%%%%%%%%%%%%%%%%%%%%%%%%%%%%%%%%%%%%%%%%%%%%%%%%%%%%%%%%%%%%%%%%
\section{Proof of Theorem~\ref{Thm-continuity}}\label{sec:prof_continuity}
%%%%%%%%%%%%%%%%%%%%%%%%%%%%%%%%%%%%%%%%%%%%%%%%%%%%%%%%%%%%%%%%%%
%%%%%%%%%%%%%%%%%%%%%%%%%%%%%%%%%%%%%%%%%%%%%%%%%%%%%%%%%%%%%%%%%%

We investigate here the case
$$\mu=\sum_{m=1}^M\nu_m\delta_{R_m},\qquad V_\mu(x)=\sum_{m=1}^M\frac{\nu_m}{|x-R_m|}$$
and start by providing the 

\begin{proof}[Proof of Lemma~\ref{lem:estim_q_lambda}]
We assume $M\geq2$, otherwise the result has already been proved before in~\cite{DolEstSer-00} with $d=0$.
We follow arguments from~\cite{BosDolEst-08} and introduce a smooth partition of unity $\sum_{k=1}^{M+1}J_k^2=1$ so that $J_k\equiv 0$ outside $ B_{d_k/2}(R_k) $ and $J_k \equiv 1$ in $B_{d_k/4}(R_k)$ for $k=1,...,M$, where $d_k=\min_{\ell\neq k}|R_k-R_\ell|$ is the distance from the other nuclei. Setting
$$d:=\min_{k\neq \ell}|R_k-R_\ell|$$
the smallest distance between the nuclei, we may assume that 
\begin{equation}
\label{est-gradients} 
\sum_{k=1}^{M+1} \|\nabla J_k\|_\infty^2 \le \frac{\kappa}{d^2}
\end{equation}
for a universal constant $\kappa$. 
By the IMS formula~\eqref{eq:IMS} we have 
\begin{align*}\label{decompos}
\int_{{\R^3}}\frac{|\sigma\cdot\nabla\,\phi|^2}{1+\lambda+V_\mu}\;dx&= \sum_{k=1}^{M+1} \int_{{\R^3}} \frac{ |\sigma \cdot \nabla\,(J_k \phi)|^2}{1+\lambda+V_\mu}\,dx-\int_{{\R^3}} \sum_{k=1}^{M+1} |\nabla J_k|^2\,\frac{|\phi|^2}{1+\lambda+V_\mu}\;dx\\
&\geq \sum_{k=1}^{M+1} \int_{{\R^3}} \frac{ |\sigma \cdot \nabla\,(J_k \phi)|^2}{1+\lambda+V_\mu}\,dx-\frac\kappa{d^2(1+\lambda)}\int_{{\R^3}}|\phi|^2\;dx\,.
\end{align*}
We can write, similarly as in~\cite[Sec. 1.4]{EstLewSer-19},
$$
\int_{\R^3}\frac{|\sigma\cdot \nabla (J_k\phi)|^2}{1+\lambda+V_\mu}\,dx=(1-\nu_k^2)\int_{\R^3}\frac{|\sigma\cdot \nabla (J_k\phi)|^2}{1+\lambda+V_\mu}\,dx+\nu_k^2\int_{\R^3}\frac{|\sigma\cdot \nabla (J_k\phi)|^2}{1+\lambda+V_\mu}\,dx\,,$$
and use the following Hardy-type inequality which was proved in~\cite{DolEstSer-00,DolEstLosVeg-04}:
\begin{equation}
\forall a\geq0,\qquad \int_{\R^3}\frac{|\sigma\cdot \nabla\phi(x)|^2}{a+1/|x|}\,dx+\int_{\R^3}\left(a-\frac{1}{|x|}\right)|\phi(x)|^2\,dx\geq0.
\label{eq:Hardy-type2}
\end{equation}
Using the fact that 
$$V_\mu \leq \frac{\nu_k}{|x-R_k|}+\frac{(M-1)\bar\nu}{d}$$
on the support of $J_k$ for $k=1,...,M$, with $\bar\nu:=\max(\nu_m)$ we obtain 
\begin{align}
&\nu_k^2\int_{\R^3}\frac{|\sigma\cdot \nabla (J_k\phi)|^2}{1+V_\mu+\lambda}\,dx\nonumber\\
&\qquad \geq \nu_k^2\int_{\R^3}\frac{|\sigma\cdot \nabla(J_k\phi)|^2}{1+\nu_k|x-R_k|^{-1}+\frac{(M-1)\bar\nu}{d}+\lambda}\,dx\nonumber\\
&\qquad \geq \int_{\R^3}\left(\frac{\nu_k}{|x-R_k|}-1-\lambda-\frac{(M-1)\bar\nu}{d}\right)|J_k\phi|^2\,dx\nonumber\\
&\qquad \geq  \int_{\R^3}V_\mu|J_k\phi|^2\,dx-\left(1+\lambda+\frac{2(M-1)\bar\nu}{d}\right)\int_{\R^3}|J_k\phi|^2\,dx\label{eq:estim_kinetic_potential}
\end{align}
for $k=1, \dots, M$. For $k=M+1$ we use that 
\[
 \int_{\R^3}V_\mu|J_{M+1}\phi|^2\,dx \le \frac{M\bar\nu}{d} \int_{\R^3}|J_{M+1}\phi|^2\,dx.
\] 
Since $-1<\lambda<1$ and $2(M-1)\geq M$ for $M\geq2$, we obtain
\begin{align}
 &\int_{\R^3}\frac{|\sigma\cdot\nabla\,\phi|^2}{1+\lambda+V_\mu}\;dx +\int_{\R^3} (1-\lambda+V)|\phi|^2\,dx\nn\\
 &\ \geq(1-\bar\nu^2)\sum_{k=1}^M\int_{\R^3}\frac{|\sigma\cdot \nabla (J_k\phi)|^2}{2+V_\mu}\,dx+\int_{\R^3}\frac{|\sigma\cdot \nabla (J_{M+1}\phi)|^2}{2+V_\mu}\,dx\nn\\
& \qquad-\left(2\lambda+\frac{2(M-1)\bar\nu}{d}+\frac{\kappa}{d^2(1+\lambda)}\right)\int_{\R^3}|\phi|^2\,dx\nn\\
 &\ \geq(1-\bar\nu^2)\int_{\R^3}\frac{|\sigma\cdot \nabla \phi|^2}{2+V_\mu}\,dx-\left(2\lambda+\frac{2(M-1)\bar\nu}{d}+\frac{\kappa}{d^2(1+\lambda)}\right)\int_{\R^3}|\phi|^2\,dx.
 \label{eq:estim_q_lambda}
\end{align}
\end{proof}

We are now ready to provide the

\begin{proof}[Proof of Theorem~\ref{Thm-continuity}]
To prove $(i)$, let us fix some $R_1,...,R_M$ all distinct from each other and let $d:=\min_{j\neq k}|R_j-R_k|>0$.  Let $\chi\in C^\ii_c(B_{d/4})$ be a function such that $\chi_{B_{d/8}}\equiv1$ and denote by $\chi_m:=\chi(x-R_m)$ the function centered at the $m$-th nucleus. Let finally $\eta:=1-\sum_{m=1}^M\chi_m$ be the function which localizes outside of the $M$ nuclei. Next we consider some new positions $R_1',...,R_M'\in\R^3$ such that $|R_m-R_m'|\leq \eps\leq d/10$ and define the following deformation of space
$$Tx=\sum_{m=1}^M\chi_m(x) (x+R_m'-R_m)+\eta(x) x=x+\sum_{m=1}^M\chi_m(x) (R_m'-R_m)$$
which sends each nucleus $R_m$ onto $R_m'$ and does not move the points located at a distance $\geq d/4$ away from the nuclei. We have $|Tx-x|\leq \eps$ and 
$$|DT(x)-\1_3|\leq C\eps\1\left(\frac{d}8 \leq \delta(x)\leq \frac{d}4\right),\qquad \delta(x):=\min_m|x-R_m|$$ 
hence $T$ is a $C^\ii$--diffeomorphism for $\eps$ small enough. For a function $\phi\in C^\ii_c(\R^2,\C^2)$ we define $\phi_T(x):=\phi(T^{-1}x)$. Denoting by $q_\lambda$ and $q'_\lambda$ the quadratic forms corresponding respectively to the nuclear positions $R_m$ and $R_m'$ we find after a change of variable in the integrals
\begin{align*}
q_\lambda(\phi_T)\leq q'_\lambda(\phi)+C\eps \int_{\frac{d}8 \leq \delta\leq \frac{d}4}|\nabla \phi|^2+|\phi|^2\leq q'_{\lambda-C\eps}(\phi)
\end{align*}
for $\eps$ small enough and $\lambda$ far enough from $-1$. 
By~\eqref{eq:lambda_k_q_lambda} this proves that 
$$\lambda_1\left(D_0-\sum_{m=1}^M\frac{\nu_m}{|x-R^\varepsilon_m|}\right)\le \lambda_1\left(D_0-\sum_{m=1}^M\frac{\nu_m}{|x-R'_m|}\right)+C\,\varepsilon$$
for $\eps$ small enough. 
We get the reverse inequality by using the inverse transformation. 

To prove $(ii)$, we assume for simplicity of notation that $\max\nu_m=\nu_M$. We then use the pointwise bound
$$\sum_{m=1}^M\frac{\nu_m}{|x-R_m|}\geq \frac{\nu_M}{|x-R_M|}$$
and the monotonicity of $\lambda^{(1)}$ with respect to the potential to deduce that 
$$ \lambda_1\left(D_0-\sum_{m=1}^M\frac{\nu_m}{|x-R_m|}\right)\le \lambda_1\left(D_0-\frac{\nu_M}{|x-R_M|}\right) = \sqrt{1-\nu_M^2}\,.$$
The reverse inequality in the limit $|R_j-R_k|\to\ii$ is proved by localizing exactly as in \cite[Cor.~4.7]{BosDolEst-08}.

Finally, we discuss the proof of $(iii)$. After a translation we may assume that $R_1=0$ and that $R_m\to0$ for all $m=2,...,M$. We denote by $q_{R,\lambda}$ and $q_{0,\lambda}$ the quadratic forms associated to the potentials $V_R=\sum_{m=1}^M\nu_m|x-R_m|^{-1}$ and $V_0=\sum_{m=1}^M\nu_m|x|^{-1}$, respectively. We denote by $\lambda_1(R)$ and $\lambda_1(0)$ the corresponding min-max values. From Theorem~\ref{thm:min-max} it is known that $\lambda_1(R)\geq0$ for all $R$ whenever
$$\sum_{m=1}^M\nu_m\leq\frac2{\pi/2+2/\pi},$$
as was assumed in the statement. By a continuation principle as in~\cite{DolEstSer-00} we know that the first min-max value $\lambda^{(1)}$ is non-negative and coincides with the first eigenvalue. 
Moreover, we recall from~\eqref{eq:Tix_K_0} that 
$$\|K_{R,0}\|\leq \frac{\pi/2+2/\pi}2\sum_{m=1}^M\nu_m<1,$$
where 
$$K_{R,0}:=\sqrt{V_R}\frac{1}{D_0}\sqrt{V_R}\to \sqrt{V_0}\frac1{D_0}\sqrt{V_0}:=K_{0,0}$$
strongly. Then we have $(1-K_{R,0})^{-1}\to (1-K_{0,0})^{-1}$ strongly. Since $(D_0)^{-1}\sqrt{V_R}$ converges in norm, this proves using~\eqref{eq:resolvent} that $(D_0-V_R)^{-1}\to (D_0-V_0)^{-1}$ in norm, hence that the eigenvalues converge. Note that the other parts of the statement also follow from this argument, when $\sum_{m=1}^M\nu_m< 2(\pi/2+2/\pi)^{-1}$.
\end{proof}

%%%%%%%%%%%%%%%%%%%%%%%%%%%%%%%%%%%%%%%%%%
% \bibliographystyle{siam}
% \bibliography{biblio}

\end{document}